\def\={=&\:}
\def\+{+&\:}
\def\-{-&\:}
\newcommand{\nn}{\nonumber}
\newcommand{\scr}{\textsuperscript}
\newlength\dlf
\def\sheaf{\mathcal} 
\def\C{\mathbb{C}}
\def\N{\mathbb{N}}
\def\Q{\mathbb{Q}}
\def\R{\mathbb{R}}
\def\Z{\mathbb{Z}}
\newcommand{\Map}{\text{Map}}
\newcommand{\rechts}{\:\rightarrow\:}
\DeclareMathOperator{\image}{im}
\def\const{\text{const.}}
\newcommand{\eps}{\varepsilon}
\newcommand{\reg}{\text{reg.}}
\newtheorem{theorem}{Theorem}
\newtheorem{'theorem'}{''Theorem``}
\newtheorem{lemma}[theorem]{Lemma}
\newtheorem{proposition}{Propos.}
\newtheorem{cor}[proposition]{Corollary}
\newtheorem{definition}[proposition]{Definition}
\newtheorem{definition and theorem}{Definition an Theorem}
\theoremstyle{plain}
\newtheorem*{remark*}{Remark}
\newtheorem*{definition*}{Definition}
\newtheorem*{example*}{Example}
\newcommand{\Res}{\text{Res}}
\newlist{mylist}{enumerate*}{1} 
\setlist[mylist]{label=(\roman*)}
\newcommand{\Cspan}{\text{span}_{\C}}
\newcommand\SK{S\hspace{-0.22em}K} 
\newcommand\SL{S\hspace{-0.22em}L} 
\newcommand{\ostar}{\mathbin{\mathpalette\make@circled*}}
\newcommand{\make@circled}[2]{%
  \ooalign{$\m@th#1\smallbigcirc{#1}$\cr\hidewidth$\m@th#1#2$\hidewidth\cr}%
}
\newcommand{\smallbigcirc}[1]{%
  \vcenter{\hbox{\scalebox{0.77778}{$\m@th#1\bigcirc$}}}%
}
\begin{document}

\title{Convolutions on the complex torus}
\author{Marianne Leitner\\
School of Mathematics, Trinity College, Dublin 2, Ireland\\
School of Theoretical Physics, DIAS, Dublin 4, Ireland\\
\textit{leitner@stp.dias.ie}
}

\maketitle

\begin{abstract}
``Quasi-elliptic'' functions can be given a ring structure in two different ways, 
using either ordinary multiplication, or convolution.
The map between the corresponding standard bases is calculated. 
A related structure has appeared recently in the computation of Feynman integrals.
The two approaches are related by a sequence of polynomials closely tied to the Eulerian polynomials.
\end{abstract}

\section{Introduction}

\subsection{Overview}

Vector bundles on elliptic curves were classified by Atiyah in 1957. 
With respect to their tensor product, they form a ring with two factors,
one related to the prime numbers and one for degree zero \cite[Theorem 12]{A:1957}. 
At that time their sections were of minor significance, 
but recently they became important for problems in quantum field theory and the related area of mock Jacobi forms. 
The sections of the degree zero part are the quasi-elliptic functions studied here. 
It is shown that apart from pointwise multiplication
the space of such functions admits a second algebraic structure that is defined and studied in this paper, namely a convolution. 
Iteratively applied to Eisenstein's zeta function \cite{E:1847}, 
this operation yields a natural basis for a vector space which has applications in number theory and quantum field theory.
A different basis is generated by the Eisenstein-Kronecker function.
The two bases have complementary properties 
and they are related 
by a family of polynomials with intriguing properties. 
As a sequence, they approach zero rapidly in an interval, with a small peak in the middle related to the Bernoulli numbers.

All of these structures may have interesting generalizations, both in the context of elliptic curves and for higher genus.

\subsection{Motivation}

In conformally invariant quantum field theory the study of $N$-point functions and Feynman diagrams on elliptic curves 
requires the calculation of iterated convolutions of Weierstrass elliptic functions and related meromorphic functions of period $1$. 
The first motivation of the present study was to obtain explicit formulae, 
since none are available in standard reference works.

Elliptic polylogarithms can be constructed as iterated integrals of Abelian differentials of the third kind, with simple poles only.
Given an elliptic curve together with $k$ points $x_1,\ldots,x_k$ on it,
they are defined recursively by \cite{BDDT:2018}
\begin{equation}\label{def: elliptic polylogarithm} 
\Gamma\left(\substack{{n_1,\ldots,n_k}\\{x_1,\ldots,x_k}};y\right)
=\int_0^y\:g^{(n_1)}(x-x_1)\:\Gamma\left(\substack{{n_2,\ldots,n_k}\\{x_2,\ldots,x_k}};x\right)\:dx
\end{equation}
where $\Gamma\left(\hspace{0.2cm};y\right)=1$. 
For $n=n_1,\ldots,n_k\in\N$, the integration kernels $g^{(n)}(x)$ are quasi-elliptic. 
Functions with this property are meromorphic on $\C$ with period $1$ but quasi-periodic along the complex period
in the sense that they lie in the kernel of some power of the difference operator. 
A standard example is Eisenstein's zeta function \cite{E:1847}, whose second iterated difference is zero.
In modern terminology, it is defined as the modified Weierstrass zeta function $\mathscr{Z}(x)=\zeta(x)-e_2x$,
where $e_2=\zeta(x+1)-\zeta(x)$ is the quasi-modular Eisenstein series of weight two.
Such functions are sections of vector bundles on an elliptic curve whose transition functions are trivial for shifts by $1$ 
and are of standard Jordan upper triangular form for shifts by another period. 
These degree zero vector bundles needed special attention in Atiyah's classification of vector bundles over an elliptic curve \cite[Theorem 8]{A:1957}. 
Up to isomorphism, an indecomposable bundle is characterized by rank, degree and a point on the curve. 
Atiyah did not extend his investigation to global sections, 
which have become a major topic of number theory in recent years. 
A typical case with non-zero degree is the Appell-Lerch sum $\mu$ that was crucial for Zweger's elucidation of mock theta functions
\cite{Zw:2002}. 
For both $\mu$ and $\mathscr{Z}$ the rank is 2 and the point on the curve is the origin, but for $\mu$ the degree is one.

Degree zero vector bundles of higher rank can be obtained as tensor products of rank two bundles. 
Sections can be described by polynomials in the variable $\mathscr{Z}$, 
but these come with undesirable higher order poles. 
This can be remedied by the use of differential polynomials, 
however this is not a particularly natural procedure.
Instead, formula (\ref{def: elliptic polylogarithm}) suggests to consider convolutions.
Iterated convolutions $\mathscr{Z}^{\ostar  n}(x)$ for $n\geq 1$ have poles of order one only.
 
The convolution $\ostar$ is defined in Section \ref{section: Special quasi-elliptic functions}
and it is shown that it induces a $\C$-bilinear commutative and associative product 
on the space of quasi-elliptic functions.
It gives rise to two alternative filtrations on the above mentioned polynomial ring in $\mathscr{Z}$.

The integration kernels $g^{(n)}(x)$ from eq.\ (\ref{def: elliptic polylogarithm}) 
are generated by the Eisenstein-Kronecker function,
which is a central object in the theory of modular forms and in integrable models.
In Section \ref{section: Application}, 
we specify a base change transformation formula,
which provides a means for computing the values of $\mathscr{Z}^{\ostar  n}(x)$ and its algebraic properties explicitly
(modular transformation behaviour, addition theorems,\ldots).
It also defines a polynomial in $\Q[x]$ of degree $n$.
The change from the functions $g^{(n)}(x)$ to the convolutions $\mathscr{Z}^{\ostar  n}(x)$
has no obvious benefit with regard to modular properties. 
It may be worth mentioning, however, that for $\gamma\in\SL(2,\Z)$, 
$\mathscr{Z}^{\ostar n}(x)|\gamma$ can be expressed as a sum over $g^{(k)}(x)$ for $0\leq k\leq n$
where the coefficient of the $k$\scr{th} term 
is proportional to the $k$\scr{th} derivative of the above mentioned polynomial.
The polynomials $p_n(x)\in\Q[x]$ associated to the converse transformations 
are equivalent to the Eulerian polynomials.
More specifically, for $n\geq 0$, 
$p_{n+1}(x)$ is related to the polylogarithm (Jonqui\`ere's function) of order $-n$
by \cite{T:1945}
\begin{equation*}
\frac{(-1)^n}{n!}\operatorname{Li}_{-n}(x)
=p_{n+1}\left(\frac{1}{1-x}\right)
\:.
\end{equation*}
Though the recurrence relation of the polynomials $p_n(x)$ and the differential equation of their generating function 
are simpler than the corresponding equations for the Eulerian polynomials, 
they do not seem to have been recognized on their own account.

By reference to the functions $g^{(k)}(x)$, iterated convolutions of $\wp(x)$ can be computed according to
\begin{equation*}
\wp^{\ostar n}(x)
=(-1)^n\frac{d^n}{dx^n}\mathscr{Z}^{\ostar  n}(x)+(-1)^ne_2^n
\:.
\end{equation*}
This answers a question raised in \cite{L:2018}.

\subsection{Notations and conventions}

Throughout this paper, $\tau\in\mathfrak{h}$, the complex upper half plane, is fixed.
Let $K_1$ be the set of elliptic functions w.r.t.\ the lattice $\Lambda=\Z+\tau\Z$.
For $k=1,2,3$, let $e_{2k}=2G_{2k}$ be the Eisenstein series of weight $2k$ in the conventions of \cite{Z:1-2-3}.
Eisenstein's zeta function is given in terms of the Weierstrass $\zeta$ function by \cite{E:1847,W:1976}
\begin{equation*}
\mathscr{Z}(x)
=\zeta(x)-e_2 x
\:.
\end{equation*}
For $m,n\in\Z$, Legendre's relation implies
\begin{equation}\label{eq: Z(x+m tau)-Z(x)}
\mathscr{Z}(x+m+n\tau)-\mathscr{Z}(x) 
=-2\pi in
\:.
\end{equation}
Let $\C^{\N}$ have the basis $\mathbf{e}_{\ell}$ for $\ell\in\N$.
Let $f:\C\rechts\C\cup\{\infty\}$ be a meromorophic function of period $1$ with poles only in $\Lambda$. 
For $m\in\Z$, we write the Laurent series expansion of $f$ at $x=m\tau$ in the form 
\begin{equation*}
f(x)
=\sum_{\ell\in\N}\frac{f_{\ell}(m)}{(x-m\tau)^{\ell}}
+\reg
\end{equation*}
where for $\ell\in\N$, $f_{\ell}\in\Map(\Z,\C)$.
We denote the singular part of $f$ by $\sigma(f)\in\Map(\Z,\C^{\N})$, 
\begin{equation*}
\sigma(f)
=\sum_{\ell\in\N}f_{\ell}\mathbf{e}_{\ell}
\:.
\end{equation*}
The subspace of polynomial maps in $\Map(\Z,\C)$ will be denoted by $\sheaf{P}$ or $\C[m]$,
and the corresponding subspace of $\Map(\Z,\C^{\N})$ by $\sheaf{P}^{\N}$.
We write $\sheaf{P}^{\N}=\oplus_{\ell=1}^{\infty}\sheaf{P}^{[\ell]}$ where for $\ell\geq 1$, $\sheaf{P}^{[\ell]}\cong\sheaf{P}$.

Let $f:\C\rechts\C\cup\{\infty\}$ be a meromorophic function of period $1$. 
We define the forward difference operator on $f$ by
\begin{equation*}
\Delta f(x)
:=f(x+\tau)-f(x)
\:,
\quad\forall x\in\C
\:.
\end{equation*}
In the case where $f$ has poles only on $\Lambda$, and $f_{\ell}\in\Map(\Z,\C)$,
we define the forward difference operator on $f_{\ell}$ by
\begin{equation}\label{eq: forward translation by 1 on polynomial}
\Delta_1f_{\ell}(m)
:=f_{\ell}(m+1)-f_{\ell}(m) 
\:.
\end{equation}
We have
\begin{equation}\label{eq: sigma commutes with Delta resp. Delta 1}
\sigma\circ\Delta
=\Delta_1\circ\sigma
\:.
\end{equation}
$\sigma$ and $\Delta$ descend to the space of meromorphic functions modulo additive constants,
where to $f$ corresponds the class $\hat{f}$. 
By abuse of notation,
we use the same letter $\sigma$ and $\Delta$, respectively,
for the corresponding maps on classes of such functions.

\section{Special quasi-elliptic functions}\label{section: Special quasi-elliptic functions}

\subsection{The vector space of special quasi-elliptic functions}

Let $f:\C\rechts\C\cup\{\infty\}$ be a meromorophic function of period $1$. 
For $k\geq 0$, let 
\begin{displaymath}
K_k=\ker\Delta^k 
\end{displaymath}
where $\Delta^k$ denotes the $k$-fold application of $\Delta$
(in particular, $\Delta^0$ is the identity operator).
$f$ will be called quasi-elliptic (w.r.t.\ the lattice $\Lambda$) if $f\in K_k$ for some $k\geq 0$.

For $k=0$, we have $K_0=\{0\}$.
$K_1$ is isomorphic to the field of elliptic functions
\begin{displaymath}
\C(X,Y)/\langle\eps(X,Y)\rangle  
\:,
\end{displaymath}
where $\eps(X,Y):=Y^2-4(X^3-15e_4X-30e_6)$.
The forward difference defines a $K_1$-linear map $\Delta:K_{k+1}\rechts K_{k}$.
The set of quasi-elliptic functions on $\C$ has the structure of a filtered algebra 
\begin{displaymath}
K=\cup_{k\geq 0}K_k 
\end{displaymath}
over the field $K_1$ w.r.t.\ pointwise multiplication.
(So if $f\in K_r$, $g\in K_s$ with $r+s\geq 1$ then $fg\in K_{r+s-1}$.)
Since $\Delta$ descends to $K/\C$, the latter has a filtration by the quotient spaces $K_k/\C$ for $k\geq 1$.
The multiplicative structure is spoiled in the process
($\hat{f}\:\hat{g}\not=\widehat{fg}$ in general, where $\hat{f}$ denotes the equivalence class of $f$ modulo addition of a constant), 
however, the quotient has the convenient feature that for $k\geq 1$, 
$\widehat{\mathscr{Z}^k}\in K_k/\C$.

For the purpose of this paper, $f\in K$ will be called special (w.r.t.\ the lattice $\Lambda$)
if all poles of $f$ are located on points of $\Lambda$.
In this case we write $f\in\SK$.

Since $\sigma$ descends to $K/\C$, the terminology also makes sense for the corresponding classes of functions modulo $\C$.
Let $\SK_k=\SK\cap K_k$.
The special elliptic functions form the ring
\begin{displaymath}
\SK_1
=\C[X,Y]/\langle\eps(X,Y)\rangle  
\:.
\end{displaymath}

\begin{theorem}\label{theorem: Struktursatz for SK functions}(Structure theorem for $\SK$ functions)
\begin{enumerate}[a)]
\item\label{theorem item: SK=SK1[Z]} 
The ring of special quasi-elliptic functions is a polynomial ring in one variable over $\SK_1$,
namely
\begin{displaymath}
\SK
=\SK_1[\mathscr{Z}] 
\:.
\end{displaymath}
Moreover, $\SK$ is a free module over $\SK_1$ with basis $\{\mathscr{Z}^k|\:k\geq 0\}$.
\item\label{theorem item:  functions in SK/C are classified by their principal part, whose Laurent coefficents are polynomials} 
We have
\begin{equation*}
\image(\sigma)
\subseteq\sheaf{P}^{\N}
\:.
\end{equation*}
\item\label{theorem item: isomorphism} 
The map 
\begin{equation}\label{map: SK mod C rechts image (oplus P[k] rechts principal parts at n tau)}
\sigma:\quad
\SK/\C\rechts\sheaf{P}^{\N}
\:
\end{equation}
is an isomorphism.
\end{enumerate}
\end{theorem}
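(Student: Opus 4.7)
I would induct on $k$ with $f \in \SK_k$. For $f \in \SK_{k+1}$, the inductive hypothesis gives $\Delta f = \sum_{j=0}^{k-1} b_j \mathscr{Z}^j$ with $b_j \in \SK_1$. From eq.~\eqref{eq: Z(x+m tau)-Z(x)} one computes $\Delta \mathscr{Z}^j = (\mathscr{Z}-2\pi i)^j - \mathscr{Z}^j = -2\pi ij\,\mathscr{Z}^{j-1}+\text{lower}$, so the map $\Delta: \SK_1[\mathscr{Z}]_{\leq k}\to \SK_1[\mathscr{Z}]_{\leq k-1}$ is surjective via a triangular system, yielding $g\in\SK_1[\mathscr{Z}]_{\leq k}$ with $\Delta g = \Delta f$. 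Then $f-g\in K_1$ has poles only on $\Lambda$, hence $f-g\in\SK_1$ and $f\in\SK_1[\mathscr{Z}]$. For freeness of $\{\mathscr{Z}^k\}$ over $\SK_1$: given a relation $\sum_{j=0}^k a_j\mathscr{Z}^j=0$ with $a_k\neq 0$, apply $\Delta^k$ (which is $\SK_1$-linear, since $\Delta$ vanishes on $\SK_1\subseteq K_1$) to extract $a_k(-2\pi i)^k k! = 0$, a contradiction.

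\textbf{Part (b).} For $f\in\SK_k$, iterating eq.~\eqref{eq: sigma commutes with Delta resp. Delta 1} $k$ times yields $\Delta_1^k f_\ell = 0$ for every $\ell$. The kernel of $\Delta_1^k$ on $\Map(\Z,\C)$ is spanned by the binomials $\binom{m}{0},\dots,\binom{m}{k-1}$, which is exactly the set of polynomials in $m$ of degree $<k$, so $f_\ell\in\sheaf{P}$.

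\textbf{Part (c), injectivity.} If $\sigma(f)=0$, then $f$ has no singular Laurent parts at lattice points and is therefore entire. Being of period $1$, it factors through $z = e^{2\pi ix}$ to a holomorphic function on $\C^\times$, and so admits a Laurent/Fourier series $f(x) = \sum_{n\in\Z}c_n e^{2\pi inx}$ convergent on all of $\C$. Then $\Delta^k f = \sum_n c_n(q^n-1)^k e^{2\pi inx}$ with $q = e^{2\pi i\tau}$. Since $|q|<1$ forces $q^n\neq 1$ for $n\neq 0$, the vanishing $\Delta^k f = 0$ gives $c_n = 0$ for all $n\neq 0$, so $f$ is constant and hence trivial in $\SK/\C$.

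\textbf{Part (c), surjectivity.} I would exhibit explicit preimages using the building blocks $\mathscr{Z}$ and the Weierstrass derivatives $\wp^{(k)}$, for which $\sigma(\mathscr{Z}) = \mathbf{e}_1$ and $\sigma(\wp^{(k)}) = (-1)^k(k+1)!\,\mathbf{e}_{k+2}$. Expanding at $x = m\tau$ via $\mathscr{Z}(x) = 1/(x-m\tau) - 2\pi im + \reg$ (from eq.~\eqref{eq: Z(x+m tau)-Z(x)}) gives
\begin{equation*}
\sigma(\mathscr{Z}^d)_\ell = \binom{d}{\ell}(-2\pi im)^{d-\ell} + (\text{lower degree in }m),
\end{equation*}
with nonzero leading coefficient for $\ell\leq d$. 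Given $\varphi\in\sheaf{P}^{\N}$, I would proceed by reverse induction on the largest pole order $L$ with $\varphi_L\neq 0$ and, within each $L$, by induction on $\deg\varphi_L$: at each step, choose an $\SK_1[\mathscr{Z}]$-linear combination whose $\sigma$ matches the top $\mathbf{e}_L$-term of $\varphi$, subtract, and recurse on the remainder. The main obstacle lies precisely in this bookkeeping: each $\mathscr{Z}^d$ contributes simultaneously to many $\mathbf{e}_\ell$, so one must arrange the induction so that the collateral contributions at smaller $\ell$ can be absorbed in later stages of the induction without disturbing the terms already cancelled.
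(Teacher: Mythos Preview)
Your arguments for part~(a) match the paper's. For part~(b) you omit one step: since $\sheaf{P}^{\N}=\oplus_\ell\sheaf{P}^{[\ell]}$ is a \emph{direct sum}, you must also check that only finitely many $f_\ell$ are nonzero. The paper does this by pigeonhole: each $f_\ell$ is a polynomial of degree $<k$, hence determined by its values on $\{0,\dots,k\}$; if infinitely many $f_\ell$ were nonzero, some fixed $m$ in this range would have $f_\ell(m)\neq 0$ for infinitely many $\ell$, contradicting meromorphicity of $f$ at $m\tau$.

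Your injectivity argument in (c) is genuinely different from, and cleaner than, the paper's. The paper invokes part~(a) to write a regular $f\in\SK$ as $\sum_\ell a_\ell\mathscr{Z}^\ell$ with $a_\ell\in\SK_1$, applies $\Delta^k$ to force $a_k$ constant, then applies $\Delta^{k-1}$ and argues that $a_{k-1}\in\SK_1$ would need a single simple pole per fundamental domain, which is impossible. Your Fourier-mode diagonalisation of $\Delta$ bypasses this pole-counting entirely.

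Surjectivity in (c) is where your sketch has a real gap. The induction you propose (``reverse on the largest pole order $L$, then forward on $\deg\varphi_L$'') does not terminate with the building blocks you name. Your own formula $\sigma(\mathscr{Z}^d)_\ell=\binom{d}{\ell}(-2\pi im)^{d-\ell}+(\text{lower})$ shows that to match the degree-$j$ leading term of $\varphi_L$ one needs $d=L+j$; but subtracting $\sigma(\mathscr{Z}^{L+j})$ creates new nonzero components at $\mathbf{e}_{L+1},\dots,\mathbf{e}_{L+j}$, so the maximal pole order goes \emph{up}. Restricting instead to combinations in which every monomial $a\mathscr{Z}^j$ (with $a\in\SK_1$) has pole order $\leq L$ does not help either: the $\mathbf{e}_L$-coefficient of each such monomial is the product of the leading Laurent coefficient of $a$ (independent of $m$, since $a$ is elliptic) with that of $\mathscr{Z}^j$, hence constant in $m$; a nonconstant $\mathbf{e}_L$-component with total pole order $L$ can only arise through cancellations among terms of individually \emph{higher} pole order, and producing those cancellations is exactly the nontrivial content of surjectivity. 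The paper avoids all of this by a different reduction: since $\sigma\circ\frac{d}{dx}=D\circ\sigma$ with $D:p\,\mathbf{e}_\ell\mapsto -\ell p\,\mathbf{e}_{\ell+1}$ an isomorphism $\sheaf{P}^{[\ell]}\to\sheaf{P}^{[\ell+1]}$, it suffices to hit every $m^N\mathbf{e}_1$. For this the paper inducts on $N$ using the surjectivity of $\Delta$ on $\SK/\C$ (established in the course of part~(a)): choose $\hat f^{(N+1)}$ with $\Delta\hat f^{(N+1)}=\hat f^{(N)}$; then $\Delta_1\sigma(\hat f^{(N+1)})=m^N\mathbf{e}_1$ forces $\sigma(\hat f^{(N+1)})=B_{N+1}(m)\mathbf{e}_1+\sum_{\ell\geq 1}c_\ell\mathbf{e}_\ell$ with the $c_\ell$ constant in $m$, and these leftover constants are removed using the inductive hypothesis and the derivatives of $\mathscr{Z}$.
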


\begin{proof}
\begin{description}
\item[Part \ref{theorem item: SK=SK1[Z]}] 
For $z\in\C$, set $B_0(z)=1$ and $B_1(z)=z$, 
and for $k\geq 1$,
\begin{displaymath}
B_{k+1}(z)
=\frac{1}{k+1}z^{k+1}-\frac{1}{2}z^k+\sum_{m=2}^{k+1}\binom{k+1}{m}B_{m}\:z^{k+1-m}
\end{displaymath}
where $B_m$ for $m\geq 2$ are the Bernoulli numbers.
We define accordingly
\begin{equation}\label{def: An}
A_{n}(x)
:=(\Delta\mathscr{Z})^{n}B_{n}\left(\frac{\mathscr{Z}(x)}{\Delta\mathscr{Z}}\right)
\:.
\end{equation}
So $A_0=1$, $A_1=\mathscr{Z}$ and for $n\geq 1$, 
\begin{equation}\label{eq: tAk as expansion on Z}
A_{n+1}
=\frac{1}{n+1}\mathscr{Z}^{n+1}-\frac{1}{2}(\Delta\mathscr{Z})\mathscr{Z}^{n}+O(\mathscr{Z}^{n-1})
\:.
\end{equation}
In our situation, $\Delta\mathscr{Z}=-2\pi i$.
By the fact that
\begin{displaymath}
B_{n+1}(z+1)-B_{n+1}(z)
=z^n 
\:,
\end{displaymath}
we have
\begin{equation}\label{eq: fiber of Delta over (Delta Z)(Z to the power of n)}
\Delta^{-1}\left((\Delta\mathscr{Z})\mathscr{Z}(x)^n\right) 
=A_{n+1}(x)
+K_1
\:.
\end{equation}
We show that
\begin{equation}\label{eq: SK k spanned over special elliptic functions by Z k' for k'<k}
\SK_k
=\Cspan\{\mathscr{Z}^{k'}|\:k'\leq k-1\}\cdot\SK_1
\: 
\end{equation}
by induction on $k$. For $k=1$ there is nothing to show.
Because of eq.\ (\ref{eq: fiber of Delta over (Delta Z)(Z to the power of n)}),
\begin{equation*}
\SK_{k+1}
=\Cspan\{A_{k'}|\:k'\leq k\}\cdot\SK_1
\:.
\end{equation*}
By eq.\ (\ref{eq: tAk as expansion on Z}),
this implies 
\begin{equation*}
\SK_{k+1}
\subseteq\Cspan\{\mathscr{Z}^{k'}|\:k'\leq k\}\cdot\SK_1
\:.
\end{equation*}
On the other hand,
$\mathscr{Z}^{k'}\in\SK_{k'+1}$.
Thus eq.\ (\ref{eq: SK k spanned over special elliptic functions by Z k' for k'<k}) holds for $k+1$.

The $\mathscr{Z}^k$ for $k\geq 1$ are linearly independent over $\SK_1$.
Otherwise, suppose for $k\geq 1$, $\exists\:\eps_0,\ldots,\eps_k\in\SK_1$ with $\eps_0\not\equiv0$ such that
\begin{equation*}
\eps_0\mathscr{Z}^k+\eps_1\mathscr{Z}^{k-1}+\ldots+\eps_{k-1}\mathscr{Z}
=0\:. 
\end{equation*}
Since for $\ell\geq 1$, $\mathscr{Z}^{\ell}\in K_{\ell+1}$,
it follows that $\eps_0\Delta^{k}\mathscr{Z}^k=0$, contradiction.
\item[Part \ref{theorem item:  functions in SK/C are classified by their principal part, whose Laurent coefficents are polynomials}]
The operator $\Delta$ on $\SK$
gives rise to an operator $\Delta_1$ on the respective Laurent coefficients, cf.\ eq.\ (\ref{eq: forward translation by 1 on polynomial}). 
If $f\in\SK_k$ then for each $\ell$, $\Delta_1^kf_{\ell}=0$.
As in part \ref{theorem item: SK=SK1[Z]}, (replacing $\Delta\mathscr{Z}$ by $\Delta_1m=1$), 
it follows from eq.\ (\ref{eq: fiber of Delta over (Delta Z)(Z to the power of n)}) 
that $f_{\ell}$ is a polynomial, of order $\leq k-1$.
Suppose $f_{\ell}\not\equiv0$ for infinitely many $\ell\in\N$.
Then $\exists m\in\{0,1,\ldots,k\}$ such that $f_{\ell}(m)\not=0$ for infinitely many $\ell$.
So $f$ is not meromorphic, contradiction.
\item[Part \ref{theorem item: isomorphism}] 
We show that the map (\ref{map: SK mod C rechts image (oplus P[k] rechts principal parts at n tau)}) is injective. 
Indeed, suppose $f\in\SK$ has no singularities.
By part \ref{theorem item: SK=SK1[Z]},
we have $f=\sum_{\ell=0}^ka_{\ell}\mathscr{Z}^{\ell}$ with $a_0,\ldots,a_n\in\SK_1$.
If $k=0$ then $f=a_0\in\SK_1$, so its image in $\SK/\C$ vanishes.
Suppose $k\geq 1$, and suppose $a_k\not=0$.
Since $\mathscr{Z}^{\ell}\in\SK_{\ell+1}$, $\Delta^kf=a_k\Delta^k\mathscr{Z}^k$ is regular by assumption on $f$, 
so $a_k=\const$
It follows that $\Delta^{k-1}f=a\mathscr{Z}+b$ with $a\propto a_{k}$ and $b\propto a_{k-1}$.
But $\Delta^{k-1}f$ is regular, so $a_{k-1}$ has a single simple pole, contradiction to $a_{k-1}\in\SK_1$.
We conclude that $f\in\C$, so $\sigma$ is injective.

$\sheaf{P}^{\N}$ has basis $\{m^k\mathbf{e}_{\ell}|\:k\geq 1\}$.
Differentiation in $\SK/\C$ gives rise to an operator $D:\sheaf{P}^{\N}\rechts\sheaf{P}^{\N}$
with
\begin{equation*}
\sigma\circ\frac{d}{dx}
=D\circ\sigma
\:.
\end{equation*}
On the direct summands, it is given by the family of operators 
\begin{equation*}
D_\ell:\quad\sheaf{P}^{[\ell]}\rechts\sheaf{P}^{[\ell+1]}
\:,\quad\ell\geq 1\:,
\end{equation*}
which map $p_{\ell}\mathbf{e}_{\ell}\in\sheaf{P}^{[\ell]}$ to $-\ell p_{\ell}\mathbf{e}_{\ell+1}\in\sheaf{P}^{[\ell+1]}$.
Since $\sigma(\mathscr{Z})=\mathbf{e}_1$, 
we have for $\ell\geq 1$,
\begin{equation}\label{eq: sigma(ell'th order derivative of Z)}
\sigma(\frac{d^{\ell}}{dx^{\ell}}\mathscr{Z})
=(-1)^{\ell}\ell!\mathbf{e}_{\ell+1}
\:.
\end{equation}
So it suffices to consider $\ell=1$ and the basis in $\sheaf{P}^{[1]}$.
Suppose that for $1\leq k\leq N$, $\exists \hat{f}^{(k)}\in\SK/\C$ such that
\begin{equation*}
\sigma\left(\hat{f}^{(k)}\right)
=m^k\mathbf{e}_1
\:.
\end{equation*}
Then $\exists\:\hat{f}^{(N+1)}\in\SK/\C$ such that
\begin{equation*}
\Delta\hat{f}^{(N+1)}
=\hat{f}^{(N)}
\:.
\end{equation*}
By eq.\ (\ref{eq: sigma commutes with Delta resp. Delta 1}),
we have
\begin{equation*}
\Delta_1(\sigma(\hat{f}^{(N+1)}))
=m^N\mathbf{e}_1
\:.
\end{equation*}
By eq.\ (\ref{eq: fiber of Delta over (Delta Z)(Z to the power of n)}) 
(with $\mathscr{Z}$ replaced by $m$, where $\Delta_1m=1$),
there exist $c_1,c_2,\ldots\in\C$ such that
\begin{equation*}
\sigma(f^{(N+1)})
=B_{N+1}(m)\mathbf{e}_1
+\sum_{\ell\geq 1}c_{\ell}\:\mathbf{e}_{\ell}
\:.
\end{equation*}
By the induction hypothesis, $\exists\:\hat{f}^{(1)},\ldots,\hat{f}^{(N)}\in\SK/\C$ such that
\begin{equation*}
B_{N+1}(m)\mathbf{e}_1
=\frac{m^{N+1}}{N+1}\mathbf{e}_1+\sum_{i=0}^Nh_i\:\sigma(\hat{f}^{(i)})
\:.
\end{equation*}
Using eq.\ (\ref{eq: sigma(ell'th order derivative of Z)}),
we conclude that 
\begin{equation*}
\frac{m^{N+1}}{N+1}\mathbf{e}_1
=\sigma\left(\hat{f}^{(N+1)}-\sum_{i=0}^Nh_i\hat{f}^{(i)}-\sum_{\ell\geq 1}c_{\ell}\:(-1)^{\ell-1}(\ell-1)!\frac{d^{\ell-1}}{dx^{\ell-1}}\mathscr{Z}\right)
\:,
\end{equation*}
so $\sigma$ is surjective.
\end{description}
\end{proof}

%

For $a\in\C$, let $T_a$ be the backward translation by $a$, $T_af(x):=f(x-a)$ for $f\in K$.

\begin{cor}
For $k\geq 1$, $T_a\SK_k/\C$ is defined for $a\in\C/\Lambda$, 
and we have
\begin{equation*}\label{map: isomorphism between all quasi-elliptic functions mod C and the direct sum of all translated special quasi-elliptic functions mod C} 
K_k/\C
=\oplus_{a\in\C/\Lambda}T_a\SK_k/\C\:.
\end{equation*}
\end{cor}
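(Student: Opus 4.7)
The plan is to adapt Theorem~\ref{theorem: Struktursatz for SK functions} and effect a partial-fraction decomposition of an arbitrary $f\in K_k$ across its pole classes.

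First I would verify that $T_a\SK_k$ depends only on $a\pmod{\Lambda}$. Translation by $1$ is the identity on $\SK_k$, whose elements have period $1$, while $T_\tau$ maps $\SK_k$ to itself because $T_\tau$ preserves period $1$, preserves the pole locus $\Lambda$, and commutes with $\Delta$. Hence $T_a\SK_k/\C$ is unambiguously a subspace of $K_k/\C$ for each class $a\in\C/\Lambda$.

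For directness of the sum, suppose a finite combination $\sum_{a\in A}T_ag_a$ with $g_a\in\SK_k$ and distinct classes $a\in\C/\Lambda$ reduces to a constant in $K_k$. The pole sets $a+\Lambda$ are pairwise disjoint, so each summand must itself be entire. The auxiliary fact I would need is that every entire $h\in K_k$ is constant; I would prove this by induction on $k$. For $k=1$ it is Liouville applied to a doubly periodic entire function. For $k\geq 2$, $\Delta h\in K_{k-1}$ is entire and hence a constant $c$ by induction; then $h+c\mathscr{Z}/(2\pi i)$ sits in $K_1$ with a simple pole of residue $c/(2\pi i)$ at $0$, and the residue theorem on a fundamental domain forces $c=0$, reducing the claim to the $k=1$ case.

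For existence, given $f\in K_k$ with finite pole set $A\subset\C/\Lambda$, I would expand
\begin{equation*}
f(x)=\sum_{\ell\geq 1}\frac{f_{\ell,a}(m)}{(x-a-m\tau)^\ell}+\reg
\end{equation*}
near $x=a+m\tau$ for each $a\in A$. The argument of part~\ref{theorem item:  functions in SK/C are classified by their principal part, whose Laurent coefficents are polynomials} of Theorem~\ref{theorem: Struktursatz for SK functions} applies verbatim: $\Delta^kf=0$ yields $\Delta_1^kf_{\ell,a}=0$, so every $f_{\ell,a}$ is a polynomial in $m$ of degree $<k$. The surjectivity of $\sigma$ from part~\ref{theorem item: isomorphism}, applied at the origin, then furnishes a class $\hat g_a\in\SK_k/\C$ with $\sigma(\hat g_a)=(f_{\ell,a})_\ell$, and $T_a\hat g_a$ reproduces the singular part of $f$ at the class $a$. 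Consequently $f-\sum_{a\in A}T_ag_a$ lies in $K_k$ and is entire, so constant by the lemma, which delivers the required equality in $K_k/\C$.

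The step I expect to be most delicate is filtration-compatibility: part~\ref{theorem item: isomorphism} provides a realising class inside $\SK/\C$, and one must verify that a singular datum whose Laurent coefficients are polynomials in $m$ of degree $<k$ actually admits a lift to $\SK_k/\C$. This should follow from the inductive construction of $\hat f^{(N)}$ in the proof of part~\ref{theorem item: isomorphism}, since each application of $\Delta^{-1}$ raises the filtration level by exactly one and the differentiation operator $D_\ell$ preserves it, but the bookkeeping needs to be spelled out carefully.
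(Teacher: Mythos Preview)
Your proof is correct and follows essentially the same partial-fraction strategy as the paper: establish well-definedness modulo $\Lambda$, argue directness via disjoint pole sets (the paper just asserts the map is a monomorphism, whereas you supply the entire-implies-constant lemma explicitly), and obtain surjectivity by lifting the singular data at each pole class through the isomorphism $\sigma$. For the filtration-compatibility you flag as delicate, the paper bypasses the inductive bookkeeping: since $\sigma$ is an isomorphism intertwining $\Delta$ with $\Delta_1$ (eq.~\eqref{eq: sigma commutes with Delta resp. Delta 1}), it automatically restricts to an isomorphism $\SK_k/\C\cong\ker(\Delta_1^k)\subset\sheaf{P}^{\N}$, so any singular datum whose Laurent coefficients are polynomials of degree $<k$ lifts to $\SK_k/\C$ without further work.
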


\begin{proof}
Let $a\in\C$ lie in the unit cell of $\Lambda$.
For the translated lattice $a+\Lambda$,
Theorem \ref{theorem: Struktursatz for SK functions}
yields $T_a\SK=T_a\SK_1[T_a\mathscr{Z}]$,
where
\begin{equation}\label{map: TaSk mod C isomorphic to P N}
T_a\SK/\C\cong\sheaf{P}^{\N} 
\:.
\end{equation}
(Note that the isomorphism does not depend on the specific representative of $a$ modulo $\Lambda$.)
A monomorphism 
\begin{equation}\label{map: isomorphism between the direct sum of all translated special quasi-elliptic functions mod C and all quasi-elliptic functions mod C} 
\oplus_{a\in\C/\Lambda}T_a\SK_k/\C\rechts K_k/\C
\:
\end{equation}
is given by replacing the direct sum of elements in $T_a\SK_k/\C\subseteq K/\C$ by their sum.
(\ref{map: isomorphism between the direct sum of all translated special quasi-elliptic functions mod C and all quasi-elliptic functions mod C}) 
is also an epimorphism:
The restriction map $\C\rechts a+\Lambda$ induces a projection $\pi_a$
from the set of the Laurent series expansions of elements $\hat{f}\in K_k/\C$ at $x\in\C$ to those at $a+\Lambda$.
By identifying a meromorphic function with the set of its Laurent series expansions, 
$\pi_a$ maps $\hat{f}\in K_k/\C$ to a meromorphic function modulo $\C$, which we denote by $\hat{f}^{(a)}$.
Since
\begin{equation*}
\pi_a\circ\Delta
=\Delta\circ\pi_a
\:,
\end{equation*}
we have $\hat{f}^{(a)}\in T_a\SK_k/\C$. 
In particular, by the isomorphism (\ref{map: TaSk mod C isomorphic to P N}),
the Laurent coefficent $\hat{f}^{(a)}_{\ell}$ of $\hat{f}^{(a)}$ for a pole of order $\ell\geq 1$
is a polynomial of order $\leq k-1$.
Since $\hat{f}^{(a)}$ is meromorphic, the argument employed in the proof of Theorem \ref{theorem: Struktursatz for SK functions}, part \ref{theorem item:  functions in SK/C are classified by their principal part, whose Laurent coefficents are polynomials} shows that 
for every $\ell\geq1$, $\hat{f}^{(a)}_{\ell}\equiv 0$ for almost all $a$ in the unit cell.
Since there are no nonconstant regular functions on $\C$, it follows that $\hat{f}^{(a)}\equiv 0$ for almost all $a$ in the unit cell of $\Lambda$.
We conclude that to every $\hat{f}\in K/\C$ is associated an element in $\oplus_{a\in\C/\Lambda}T_a\SK_k/\C$.
So (\ref{map: isomorphism between the direct sum of all translated special quasi-elliptic functions mod C and all quasi-elliptic functions mod C}) is surjective.
\end{proof}

Any function in $\SK$ can be characterized by its class in $\SK/\C$ together with its value under contour integration
\begin{displaymath}
\oint:\quad\SK\rechts\C 
\end{displaymath}
along the real period.
Note that
\begin{equation*}
\oint\circ\frac{d}{dx}
=0
\end{equation*}
so that it suffices to treat functions with first order poles only.

\subsection{Convolutions on the complex torus}

\begin{definition}
Let $f,g:\C\rechts\C\cup\{\infty\}$ be meromorphic, $1$-periodic functions having poles only in $\Lambda$.
For $0<\Im(x)<2\Im(\tau)$, we define 
\begin{equation}\label{eq: def of f ostar g}
(f\ostar_+g)(x) 
:=\oint_{\Im(z)=\frac{\Im(x)}{2}}f(x-z)g(z)dz
\:.
\end{equation}
\end{definition}

By Cauchy's Theorem,$f\ostar_+ g$ has a unique analytic continuation, 
which is meromorphic on $\C$ and has period $1$. 
By abuse of notation, we also write $f\ostar_+ g$ for the analytic continuation of $f\ostar_+ g$.

\begin{proposition}
For $f,g\in\SK$, we have the product rule (for the convolution $\ostar_+$)
\begin{equation}\label{eq: Delta(f ostar+ g) with Delta moved into first factor}
\Delta(f\ostar_+g)(x)
=(\Delta f\ostar_+g)(x)+2\pi i\Res_{z=\tau}\left[f(z)g(x+\tau-z)\right]\:,
\quad
x\in\C\setminus\Lambda\:.
\end{equation}
\end{proposition}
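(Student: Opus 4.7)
The plan is to unfold both sides of $\Delta(f\ostar_+ g)(x) = (f\ostar_+g)(x+\tau) - (f\ostar_+ g)(x)$ as honest contour integrals and then match them by a contour shift.  Restrict first to $x$ with $0 < \Im(x) < \Im(\tau)$, which ensures both terms in the difference are given by the integral definition (\ref{eq: def of f ostar g}), the first with contour at height $h_1 := \Im(x)/2 + \Im(\tau)/2$ and the second at height $h_0 := \Im(x)/2$; analytic continuation then extends the result to $\C \setminus \Lambda$.  In the first integral use
\begin{equation*}
f(x+\tau - z) = f(x-z) + (\Delta f)(x-z)
\end{equation*}
to split $(f\ostar_+ g)(x+\tau) = I_1 + I_2$, where $I_1$ has integrand $f(x-z)g(z)$ and $I_2$ has integrand $(\Delta f)(x-z) g(z)$, both along the contour $\Im(z) = h_1$.

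Next, I would shift each of these contours from height $h_1$ down to $h_0$.  The integrands are $1$-periodic in $z$, so the vertical sides of the connecting rectangle cancel.  The only poles of $g$ lie on $\Lambda$, with imaginary parts $n\Im(\tau)$, none of which lie in the open strip $h_0 < \Im(z) < h_1 \subset (0,\Im(\tau))$; the poles of $f(x-z)$ and $(\Delta f)(x-z)$ lie at $z \in x - \Lambda$, and precisely the horizontal line $\Im(z) = \Im(x)$ sits in the strip, contributing one pole at $z=x$ per horizontal period.  Hence
\begin{equation*}
I_1 = (f\ostar_+ g)(x) - 2\pi i\,\Res_{z=x}[f(x-z)g(z)], \qquad
I_2 = (\Delta f\ostar_+ g)(x) - 2\pi i\,\Res_{z=x}[(\Delta f)(x-z)g(z)].
\end{equation*}

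Adding, the two residues combine (by the same identity $f + \Delta f = T_{-\tau}f$) into $\Res_{z=x}[f(x+\tau-z)g(z)]$, and the change of variable $w = x + \tau - z$ (with $dz = -dw$, sending $z=x$ to $w=\tau$) converts this into $-\Res_{w=\tau}[f(w) g(x+\tau-w)]$.  Substituting back gives exactly (\ref{eq: Delta(f ostar+ g) with Delta moved into first factor}).  The main bookkeeping obstacle is keeping track of the sign: the contour shifts go \emph{downward} across the pole (producing $-2\pi i$ residue contributions), but the variable change $z \mapsto w$ reverses orientation and flips the sign once more, so the two minus signs combine with the decomposition $f = (f+\Delta f) - \Delta f$ to yield the positive $+2\pi i\,\Res_{z=\tau}$ on the right-hand side.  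Once the identity is established on the strip, both sides are meromorphic on $\C$, so the claim extends to all $x \in \C\setminus\Lambda$.
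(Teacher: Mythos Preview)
Your proof is correct and follows essentially the same approach as the paper: both arguments restrict to $0<\Im(x)<\Im(\tau)$, write the two terms of $\Delta(f\ostar_+g)(x)$ as contour integrals at heights $h_1=\Im(\tau)/2+\Im(x)/2$ and $h_0=\Im(x)/2$, shift the higher contour down to pick up the residue at $z=x$, and then apply the substitution $z\mapsto x+\tau-z$ to rewrite the residue at $\tau$. The only cosmetic difference is that the paper shifts the single integrand $f(x+\tau-z)g(z)$ first and splits off $\Delta f$ afterwards (collecting one residue), whereas you split $f(x+\tau-z)=f(x-z)+(\Delta f)(x-z)$ first and shift both pieces (collecting two residues that then recombine); the net effect is identical.
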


\begin{proof}
Suppose $\Im(x)=\eps$ with $0<\eps<\Im(\tau)$.
We have
\begin{equation*}
\Delta(f\ostar_+g)(x)
=\oint_{\Im(z)=\frac{\Im(\tau)}{2}+\frac{\eps}{2}}f(x+\tau-z)g(z)dz
-\oint_{\Im(z)=\frac{\eps}{2}}f(x-z)g(z)dz
\:.
\end{equation*}
By deforming the contour of the first integral into that of the second, 
the imaginary part of the argument of $f$ changes from a value $<\Im(\tau)$ to a value $>\Im(\tau)$,
so $f$ crosses a singularity for $\Im(z)=\Im(x)$, while $g$ remains regular throughout.
This yields
\begin{equation*}
\oint_{\Im(z)=\frac{\Im(\tau)}{2}+\frac{\eps}{2}}f(x+\tau-z)g(z)dz
=\oint_{\Im(z)=\frac{\eps}{2}}f(x+\tau-z)g(z)dz
-2\pi i\Res_{z=x}\left[f(x+\tau-z)g(z)\right] 
\:.
\end{equation*}
The variable transformation $z\mapsto x+\tau-z$ 
yields
\begin{equation*}
\Res_{z=x}\left[f(x+\tau-z)g(z)\right]
=-\Res_{z=\tau}\left[f(z)g(x+\tau-z)\right]
\:.
\end{equation*}
This proves the equation for $0<\Im(x)<\Im(\tau)$.
By analytic continuation, it holds for $x\in\C\setminus\Lambda$.
\end{proof}

\begin{lemma}\label{lemma: The set of quasi-elliptic functions modulo C defines a filtered algebra w.r.t. the convolution}
The set of special quasi-elliptic functions modulo $\C$ defines a filtered algebra 
\begin{displaymath}
\SK/\C
=\cup_{k\geq 0}\:\SK_k/\C 
\end{displaymath}
w.r.t.\ the convolution $\ostar_+$.
(So if $\hat{f}\in\SK_r/\C$ and $\hat{g}\in\SK_s/\C$ then $\hat{f}\ostar_+ \hat{g}\in\SK_{r+s}/\C$.) 
\end{lemma}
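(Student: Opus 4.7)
The plan is to induct on $r$, exploiting the product rule (\ref{eq: Delta(f ostar+ g) with Delta moved into first factor}) from the preceding proposition. Before inducting, I must check that $f \ostar_+ g$ actually lies in $\SK$ (not just in $K$). Since $f$ and $g$ have poles only in $\Lambda$, the $z$-singularities of the integrand in (\ref{eq: def of f ostar g}) form the discrete sets $(x-\Lambda)$ and $\Lambda$, and these intersect if and only if $x \in \Lambda$. For any other $x$ the contour can be deformed to avoid all singularities, yielding an analytic value; hence $f \ostar_+ g$ is meromorphic with poles only in $\Lambda$ and so lies in $\SK$.

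The heart of the argument is the identification of the residue term on the right of (\ref{eq: Delta(f ostar+ g) with Delta moved into first factor}) with a differential polynomial in $g$. Combining the Laurent expansion $f(z) = \sum_{\ell \geq 1} f_\ell(1)(z-\tau)^{-\ell} + \reg$ at $z=\tau$ with the Taylor expansion $g(x+\tau-z) = \sum_{k \geq 0} \frac{(-1)^k}{k!}\, g^{(k)}(x)\,(z-\tau)^k$ and extracting the coefficient of $(z-\tau)^{-1}$ gives
\begin{equation*}
\Res_{z=\tau}[f(z)\, g(x+\tau-z)] = \sum_{\ell \geq 1} \frac{(-1)^{\ell-1} f_\ell(1)}{(\ell-1)!}\, g^{(\ell-1)}(x),
\end{equation*}
a finite sum since $f$ has a pole of finite order at $\tau$. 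Because $d/dx$ commutes with $\Delta$, each $g^{(\ell-1)}$ again lies in $\SK_s$, and consequently the entire residue term lies in $\SK_s$.

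The induction on $r$ is then short. For $r=1$, $\Delta f = 0$, so (\ref{eq: Delta(f ostar+ g) with Delta moved into first factor}) reduces to $\Delta(f \ostar_+ g) = 2\pi i \cdot (\text{residue}) \in \SK_s$, whence $\widehat{f \ostar_+ g} \in \SK_{s+1}/\C$. For $r \geq 2$, the inductive hypothesis applied to $\Delta f \in \SK_{r-1}$ yields $\Delta f \ostar_+ g \in \SK_{r+s-1}/\C$, while the residue contribution lies in $\SK_s \subseteq \SK_{r+s-1}$; adding the two gives $\Delta(f \ostar_+ g) \in \SK_{r+s-1}/\C$. Any representative of a class in $\SK_{r+s-1}/\C$ is annihilated by $\Delta^{r+s-1}$ (applied to a representative, since $\Delta$ kills constants), so $\Delta^{r+s}(f \ostar_+ g) = 0$ and therefore $\widehat{f \ostar_+ g} \in \SK_{r+s}/\C$.

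The main obstacle I anticipate is the residue computation—recognising that extracting the $(z-\tau)^{-1}$ coefficient realises the residue as a concrete differential operator in $g$ with coefficients $f_\ell(1)$. Once this identification is in hand, the stability of $\SK_s$ under $d/dx$ closes the argument almost immediately. A secondary, purely technical point is the verification that $f \ostar_+ g \in \SK$, which rests entirely on the disjointness $(x - \Lambda) \cap \Lambda = \emptyset$ for $x \notin \Lambda$.
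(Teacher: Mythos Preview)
Your proof is correct and follows essentially the same approach as the paper: both use the product rule (\ref{eq: Delta(f ostar+ g) with Delta moved into first factor}) to reduce the question to controlling the residue term, identify that residue as a finite linear combination of derivatives of $g$ (you write out the explicit formula via Laurent/Taylor expansion, the paper argues qualitatively by cases on the pole order of $f$ at $\tau$), and then invoke that $\Delta$ commutes with $d/dx$ to keep everything inside $\SK_s$. The only cosmetic differences are that you induct on $r$ alone while the paper inducts on $r+s$, and that you explicitly verify $f\ostar_+ g$ has poles only in $\Lambda$, a point the paper defers.
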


\begin{proof}
For $i=1,2$, let $\hat{f}_i\in\SK_{k_i}/\C$.
We have to show that
\begin{equation}\label{inclusion: hat fk1 ostar+ hat fk2 in SK(k1+k2) mod C}
\hat{f}_1\ostar_+\hat{f}_2\in\SK_{k_1+k_2}/\C 
\:.
\end{equation}
We may assume that both $k_1,k_2\geq 1$ since the inclusion (\ref{inclusion: hat fk1 ostar+ hat fk2 in SK(k1+k2) mod C}) is trivial otherwise.
We proceed by induction on $k_1+k_2$.
Suppose that for $2\leq k_1+k_2\leq N$, (\ref{inclusion: hat fk1 ostar+ hat fk2 in SK(k1+k2) mod C}) holds.
Let $k_1+k_2=N+1$.
Since $\Delta\hat{f}_1\in\SK_{k_1-1}/\C$,
we have $(\Delta \hat{f}_1)\ostar_+ \hat{f}_2\in\SK_N/\C$ by hypothesis,
and by eq.\ (\ref{eq: Delta(f ostar+ g) with Delta moved into first factor}) 
it remains to show that $\Res_{z=\tau}[\hat{f}_1(z)\hat{f}_2(x+\tau-z)]\in\SK_N/\C$ for $x\not\in\Lambda$.

Denote by $o_{\hat{f}_1}(z)$ the pole order of $\hat{f}_1$ at the point $z$.
If $o_{\hat{f}_1}(\tau)=0$, the proof is complete.
For $o_{\hat{f}_1}(\tau)=1$, the residue is a constant multiple of $\hat{f}_2(x)\in\SK_N/\C$. 
If $o_{\hat{f}_1}(\tau)>1$, the residue is a complex polynomial in $z-\tau$ whose coefficients are derivatives of $\hat{f}_2$ of order $\leq (o_f(\tau)-1)$, at position $x$.
But $\Delta$ commutes with differentiation,
\begin{equation*}
\Delta\circ\frac{d}{dx}
=\frac{d}{dx}\circ\Delta 
\end{equation*}
so residue lies in $\SK_N/\C$ again.
We conclude that $\Delta(\hat{f}_1\ostar_+ \hat{f}_2)\in \SK_{N}/\C$, 
so the proof of (\ref{inclusion: hat fk1 ostar+ hat fk2 in SK(k1+k2) mod C}) is complete.
\end{proof}

\begin{theorem}\label{theorem: basic properties of the product ostar}
The convolution $\ostar_+$ defines a $\C$-bilinear product $\SK\times\SK\rechts\SK$ 
with the following properties: For $f,g,h\in\SK$ we have
\begin{enumerate}
 \item (Commutativity:) $f\ostar_+ g=g\ostar_+ f$.
 \item (Associativity:) $(f\ostar_+ g)\ostar_+ h=f\ostar_+(g\ostar_+ h)$.
 \item\label{theorem item: regularity} (Regularity:) 
 Suppose for $n,m\in\N$, 
 $f$ is regular on $0<\Im(x)<m\Im(\tau)$, and $g$ is regular on $0<\Im(x)<n\Im(\tau)$.
 Then $f\ostar_+ g$ is regular on $0<\Im(x)<(m+n)\Im(\tau)$.
 \end{enumerate}
\end{theorem}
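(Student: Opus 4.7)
Bilinearity of $\ostar_+$ is immediate from the definition (\ref{eq: def of f ostar g}), and the integral has period $1$ in $x$ by construction. That $f\ostar_+g$ lies in $\SK$ (not merely in $\SK/\C$) will follow from Lemma \ref{lemma: The set of quasi-elliptic functions modulo C defines a filtered algebra w.r.t. the convolution} once the regularity statement below confines the poles to $\Lambda$.

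\textbf{Commutativity.} For $0<\Im(x)<2\Im(\tau)$ I would apply the substitution $w=x-z$ in (\ref{eq: def of f ostar g}). The image contour is horizontal at the same height $\Im(x)/2$ but traversed in the opposite direction; the sign from $dz=-dw$ cancels this reversal, and the $1$-periodicity of $f(w)g(x-w)$ in $w$ allows the endpoints to be shifted back to a standard fundamental interval. The result is $(f\ostar_+g)(x)=\oint_{\Im(w)=\Im(x)/2}f(w)g(x-w)\,dw=(g\ostar_+f)(x)$, and analytic continuation extends the identity to $\C\setminus\Lambda$.

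\textbf{Associativity.} The plan is to realise both $((f\ostar_+g)\ostar_+h)(x)$ and $(f\ostar_+(g\ostar_+h))(x)$ as the single double contour integral
\[
\int_{\Gamma_1}\int_{\Gamma_2}f(x-w-z)\,g(z)\,h(w)\,dz\,dw,
\]
where $\Gamma_i$ is a period-$1$ horizontal contour at height $h_i$. In the iterated form $f\ostar_+(g\ostar_+h)$ this uses the change of variables $(v,u)\mapsto(z+w,w)$ inside the $(g\ostar_+h)$ integral; in the form $(f\ostar_+g)\ostar_+h$ the identification is direct. For $\Im(x)$ small enough, one can choose $h_1,h_2>0$ so that each of $h_1$, $h_2$ and $\Im(x)-h_1-h_2$ lies in $(0,\Im(\tau))$, ensuring that $h$, $g$ and $f$ are simultaneously regular on the product contour and that Fubini's theorem applies. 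Analytic continuation then extends the identity.

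\textbf{Regularity and main obstacle.} For the regularity claim, fix $x$ with $0<\Im(x)<(m+n)\Im(\tau)$ and choose a contour height $h$ in the intersection of $(0,n\Im(\tau))$ (keeping $g(z)$ regular) and $(\Im(x)-m\Im(\tau),\Im(x))$ (keeping $f(x-z)$ regular); the hypotheses on $\Im(x)$ make this intersection non-empty, so the resulting contour integral is a locally holomorphic function of $x$, and a continuous deformation from the original height $\Im(x)/2$ identifies it with the analytic continuation of $f\ostar_+g$. The main obstacle throughout is the Fubini step in associativity: the three meromorphic factors must be jointly regular on the entire product contour, which is the reason for first restricting $x$ to a small sub-strip where the height constraints above can be satisfied, and only afterwards concluding by analytic continuation.
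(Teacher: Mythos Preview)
Your argument is correct and follows essentially the same plan as the paper's proof: reduce each property to a statement about the defining contour integral in a sufficiently small strip, then extend by analytic continuation. The paper packages commutativity and associativity via the symmetric representation
\[
(f\ostar_+g)(x)=\int_0^1\!\int_0^1 f(x_1+iy_1)\,g(x_2+iy_2)\,\delta(x_1+x_2-\Re(x))\,dx_1\,dx_2,
\]
with $y_1=y_2=\Im(x)/2$, and then symmetrises the heights in the triple integral to $y/3$; you obtain the same identities by the explicit substitution $w=x-z$ and by Fubini on the double contour integral with freely chosen heights $h_1,h_2$. These are the same contour deformations, just without the delta-function bookkeeping. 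Your regularity argument (choosing $h$ in the nonempty intersection of the two strips and varying it with $x$) is likewise equivalent to the paper's one-line appeal to the symmetric form.

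One small point: your sentence ``$f\ostar_+g\in\SK$ will follow from Lemma~\ref{lemma: The set of quasi-elliptic functions modulo C defines a filtered algebra w.r.t. the convolution} once the regularity statement below confines the poles to $\Lambda$'' overstates the role of part~\ref{theorem item: regularity}. Lemma~\ref{lemma: The set of quasi-elliptic functions modulo C defines a filtered algebra w.r.t. the convolution} already yields $\widehat{f\ostar_+g}\in\SK/\C$, hence $f\ostar_+g\in\SK$ outright; the regularity item only treats a single strip and does not by itself locate all poles on $\Lambda$. The paper phrases this as a direct consequence of the product rule (\ref{eq: Delta(f ostar+ g) with Delta moved into first factor}), which is what underlies the Lemma.
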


\begin{proof}
From eq. (\ref{eq: Delta(f ostar+ g) with Delta moved into first factor}) follows $f,g\in\SK$ $\Rightarrow$ $f\ostar_+ g\in\SK$. 
$\C$-bilinearity is manifest from the defining eq.\ (\ref{eq: def of f ostar g}).
\begin{enumerate}
\item
Let $\delta(x)$ be the Dirac delta distribution on $\R$. 
After substitution $x_1=\Re(x-z)$, $y_1=\Im(x-z)$ and $x_2=\Re(z)$, $y_2=\Im(z)$,
eq.\ (\ref{eq: def of f ostar g}) reads
\begin{equation}\label{eq: symmetric def of f ostar g}
(f\ostar_+ g)(x)
=\int_0^1\int_0^1f(x_1+iy_1)g(x_2+iy_2)\delta(x_1+x_2-\Re(x))\:dx_1dx_2  
\end{equation}
for $y_1=y_2=\frac{\Im(x)}{2}$, so the product is symmetric.
More generally, we can put $0<y_1,y_2<\Im(\tau)$ and $y_1+y_2=\Im(x)$. 
\item 
For $f,g,h\in\SK$, and for $0<y<2\Im(\tau)$,
we have
\begin{equation*}
((f\ostar_+ g)\ostar_+ h)(x+iy)
=\int_0^1\int_0^1(f\ostar_+ g)(z_a)h(z_b)\:\delta(x_a+x_b-x)\:dx_adx_b
\end{equation*}
where for $\iota=a,b$, we have $z_{\iota}=x_{\iota}+iy_{\iota}$ with $x_{\iota}\in\R$ and $y_{\iota}=y/2$. 
But
\begin{align*}
((f\ostar_+ g)\ostar_+ h)(x+iy)
\=\int_0^1\int_0^1\int_0^1f(z_1)g(z_2)h(z_b)\:\delta(x_1+x_2-x_a)\times\\
&\delta(x_a+x_b-x)\:dx_1dx_2dx_b
\:,
\end{align*} 
where for $j=1,2$, we have $z_j=x_j+iy_j$ with $x_j\in\R$ and $y_j=\frac{y_b}{2}=y/4$, and so $y_1+y_2+y_b=y$
(with the corresponding equation for the real parts).
The range can be symmetrized to $y_1=y_2=y_b=\frac{y}{3}$ without the intergrand crossing a singularity,
since $f,g,h$ are special and $0<y_1,y_2,y_b<\Im(\tau)$. 
\item This follows from eq.\ (\ref{eq: symmetric def of f ostar g}).
Indeed, $f\ostar_+ g$ can be analytically continued to $0<\Im(x)<(n+m)\Im(\tau)$ without the integral acquiring residue terms.
\end{enumerate}
\end{proof}

\begin{cor}\label{cor: n-fold convolution of functions in SK, written symmetrically} 
Let $n\geq 2$ and let $f_1,\ldots,f_n\in\SK$.
For $x,y\in\R$ with $0<y<n\Im(\tau)$,
we have
\begin{equation*}
(f_1\ostar_+\ldots\ostar_+ f_n)(x+iy) 
=\int_0^1\ldots\int_0^1f_1(x_1+iy_1)\ldots f_n(x_n+iy_n)\:\delta(\sum_{i=1}^nx_i-x)\:dx_1\ldots dx_n
\end{equation*}
with $y_1=\dots=y_n=\frac{y}{n}$.
\end{cor}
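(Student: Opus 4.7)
The natural approach is induction on $n$, taking the symmetric representation in eq.\ (\ref{eq: symmetric def of f ostar g}) as the base case $n=2$, where $y_1=y_2=y/2$ is exactly the prescribed symmetric choice. For the inductive step I would use the associativity established in Theorem \ref{theorem: basic properties of the product ostar} to write
\begin{equation*}
(f_1\ostar_+\ldots\ostar_+ f_n)(x+iy)
=\bigl(f_1\ostar_+(f_2\ostar_+\ldots\ostar_+ f_n)\bigr)(x+iy)
\:,
\end{equation*}
apply the symmetric form of the binary convolution to the outer product, and invoke the induction hypothesis on the inner $(n-1)$-fold convolution.

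In detail: since $0<y<n\Im(\tau)$, I would first choose the splitting $y=y^{(1)}+y^{(n-1)}$ with $y^{(1)}=y/n$ and $y^{(n-1)}=(n-1)y/n$, so that $0<y^{(1)}<\Im(\tau)$ and $0<y^{(n-1)}<(n-1)\Im(\tau)$. The inner convolution is regular on the strip $0<\Im(x)<(n-1)\Im(\tau)$ except for its prescribed singularities, by the regularity clause of Theorem \ref{theorem: basic properties of the product ostar}, so the binary formula (\ref{eq: symmetric def of f ostar g}) applies with these heights. Substituting the induction hypothesis into the argument of $f_2\ostar_+\ldots\ostar_+ f_n$, with all inner heights equal to $y^{(n-1)}/(n-1)=y/n$, produces an $n$-fold integral over $[0,1]^n$ against $\delta(x_1+\ldots+x_n-x)$ (after combining the two delta constraints via integration in the intermediate variable $x_a$), with $y_1=y/n$ and $y_2=\ldots=y_n=y/n$ as required.

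The step that needs care is the contour/height symmetrization: at each level of the recursion we have freedom to choose how the total height is split between the two factors, provided each individual height stays in $(0,\Im(\tau))$ so that the binary convolution integrand meets no pole (the $f_i$ are special, hence their only poles lie on $\Lambda$). The same argument already used in the proof of associativity in Theorem \ref{theorem: basic properties of the product ostar} shows that the integrand can be deformed through the range $0<y_i<\Im(\tau)$ without crossing a singularity, because for each $i$ the function $f_i(x_i+iy_i)$ is a $1$-periodic meromorphic function whose poles in the strip $0\leq y_i<\Im(\tau)$ lie only at integer points, and integration in $x_i\in[0,1]$ avoids them. Hence the common choice $y_1=\ldots=y_n=y/n$ is admissible.

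The only real obstacle is the bookkeeping of the nested $\delta$-distributions produced by iterated application of the binary formula. I would handle this by integrating out the intermediate sum variables explicitly: the two constraints $\delta(x_1+x_2'-x)$ and $\delta(x_2+\ldots+x_n-x_2')$ fold into the single constraint $\delta(x_1+\ldots+x_n-x)$ after integration over $x_2'$. Once this combinatorial identification is made, the desired symmetric formula emerges directly.
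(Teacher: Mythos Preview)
Your proposal is correct and follows exactly the approach implicit in the paper: the corollary carries no separate proof there, being the obvious inductive extension of the $n=3$ computation in the associativity part of Theorem \ref{theorem: basic properties of the product ostar}. Your bookkeeping of the nested $\delta$-constraints and the height symmetrization matches that argument.

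One small point deserves tightening. You apply the binary formula (\ref{eq: symmetric def of f ostar g}) to $f_1\ostar_+(f_2\ostar_+\ldots\ostar_+ f_n)$ with heights $y^{(1)}=y/n$ and $y^{(n-1)}=(n-1)y/n$; but for general $0<y<n\Im(\tau)$ the second height may exceed $\Im(\tau)$, and the symmetric binary formula as stated requires both heights to lie in $(0,\Im(\tau))$. The regularity clause you invoke does not directly license this. The clean fix (which is also implicit in the paper's $n=3$ argument) is to first run the induction for $y$ small enough that every height remains in $(0,\Im(\tau))$, obtain the symmetric $n$-fold integral there, and then observe that both sides are holomorphic in $y$ on the full strip $0<y<n\Im(\tau)$, so the identity extends by analytic continuation.
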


For $f\in\SK$ and the product $f\ostar_+\ldots\ostar_+ f$ (with $n\geq 2$ factors), 
we shall also write $f^{\ostar_+ n}$.
We set $f^{\ostar_+ 0}=1$ and $f^{\ostar_+ 1}=f$.
Thus for $m,n\geq 0$, we have $f^{\ostar_+ n}\ostar_+ f^{\ostar_+ m}=f^{\ostar_+ (n+m)}$.

Another useful identity is $\frac{d}{dx}(f\ostar_+g)=(\frac{d}{dx}f)\ostar_+g=f\ostar_+(\frac{d}{dx}g)$.

\begin{theorem}\label{theorem: pole order of f ostar g leq o(f)+o(g)-1}
Let $f,g:\C\rechts\C\cup\{\infty\}$ be meromorphic and $1$-periodic functions. 
Define a meromorphic function $f\ostar g$ by the integral in eq.\ (\ref{eq: def of f ostar g}).
\begin{enumerate}[a)]
 \item\label{theorem item: pole set of convolution of two meromorphic 1-periodic functions} 
 Let $a_I=\{a_i\}_{i\in I}$ and $b_J=\{b_j\}_{j\in J}$ be the pole sets of $f$ and of $g$, respectively.
 The pole set of $f\ostar g$ is $a_I+b_J=\{a_i+b_j|a_i\in a_I,\:b_j\in b_J\}$.
\item\label{theorem item: pole order of convolution of two meromorphic 1-periodic functions} 
If $f$ and $g$ have maximal pole order $o_f$ and $o_g$, respectively, where $o_f+o_g\geq 1$,
then the integral in eq.\ (\ref{eq: def of f ostar g}) has maximal pole order
\begin{equation*}
o_{f\ostar g}\leq o_f+o_g-1 
\:.
\end{equation*}
\item\label{theorem item: filtration on K defined by the pole order}  
The set $K$ of quasi-elliptic functions has the structure of a filtered algebra over $\C$ w.r.t.\ $\ostar$,
defined by the pole order.
\end{enumerate}
\end{theorem}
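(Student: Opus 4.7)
The plan is to deduce parts (a) and (b) together from a local analysis of the integral (\ref{eq: def of f ostar g}) at each candidate pole location, and then to obtain (c) formally. For (c) the $\C$-bilinearity of $\ostar$ is immediate from the integral definition, while the commutativity and associativity proofs of Theorem \ref{theorem: basic properties of the product ostar} carry over verbatim to meromorphic $1$-periodic $f,g,h$ (the symmetric presentation via a Dirac delta makes no use of quasi-ellipticity beyond the deformability of contours). Combined with the pole bound from (b), this gives $K$ the structure of a filtered $\C$-algebra under $\ostar$: writing $F_m$ for the quasi-elliptic functions of pole order at most $m$, part (b) reads $F_m \ostar F_n \subseteq F_{m+n-1}$, which becomes a filtration in the conventional sense after the index shift $\tilde F_m := F_{m+1}$.

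For (a) and (b), I would fix a candidate $x_0 \equiv a_i + b_j \pmod{\Z}$ and let $x$ vary in a small neighborhood of $x_0$. Because $f$ and $g$ are $1$-periodic, the singularities of the integrand $f(x-z)g(z)$ in $z \in \C/\Z$ lie on horizontal arrays: those from $g$ at $z \equiv b_{j'}$ and those from $f(x-z)$ at $z \equiv x - a_{i'}$. Only the two arrays corresponding to the fixed pair $(i,j)$ coalesce as $x \to x_0$; all other singularities stay separated locally in $x$. For such $x$ I would decompose the defining contour into (i) a fixed reference contour that avoids every singularity in a chosen horizontal strip and hence yields an integral analytic in $x$, plus (ii) a small loop around $z = b_j$ picking up the local singularity via Cauchy's theorem. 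This exhibits $(f \ostar g)(x)$, up to an analytic function on a neighborhood of $x_0$, as $2\pi i \Res_{z = b_j}[f(x-z) g(z)]$.

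Writing the Laurent expansion $g(z) = \sum_{k=1}^{o_g} g_{j,k}(z - b_j)^{-k} + \reg$ and expanding $f(x-z)$ in powers of $z - b_j$, the residue evaluates to
\begin{equation*}
\Res_{z = b_j}\left[f(x-z)\,g(z)\right]
= \sum_{k=1}^{o_g} \frac{(-1)^{k-1} g_{j,k}}{(k-1)!}\:f^{(k-1)}(x - b_j)\:.
\end{equation*}
At $x = a_i + b_j$, the $k$th summand has a pole of order at most $o_f + k - 1$, so the leading order is bounded by $o_f + o_g - 1$, proving (b); the corresponding leading coefficient is a nonzero multiple of the product of the top Laurent coefficients of $f$ at $a_i$ and of $g$ at $b_j$ (assuming those are both maximal), confirming that $a_i + b_j$ genuinely lies in the pole set of (a). I expect the main obstacle to be the contour decomposition in the preceding paragraph: one must verify that for $x$ in a punctured neighborhood of $x_0$ the coalescing pair of singularities can be cleanly separated from all other singularities in a strip of positive width by a contour depending analytically on $x$. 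This comes down to tracking the imaginary parts of the singularity arrays as $x$ varies and is routine but delicate since the singularity positions of $f(x-z)$ themselves move with $x$.
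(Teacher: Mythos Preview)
Your approach is essentially the same as the paper's: both isolate the singular part of $f\ostar g$ near a candidate pole as a residue of the integrand and then bound the pole order of that residue term. The paper organizes the contour deformation as an inductive strip-by-strip shift and defers the bound $o_{\Res_{w=w_0}[f(z-w)g(w)]}\leq o_f+o_g-1$ to the argument in the proof of Lemma~\ref{lemma: The set of quasi-elliptic functions modulo C defines a filtered algebra w.r.t. the convolution}, whereas you work locally at each $x_0=a_i+b_j$ and write out the residue $\sum_k \tfrac{(-1)^{k-1}g_{j,k}}{(k-1)!}f^{(k-1)}(x-b_j)$ explicitly; these are two presentations of the same computation.
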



\begin{proof}
\begin{description}
 \item[Part \ref{theorem item: pole set of convolution of two meromorphic 1-periodic functions}]
 $f(x-z)g(z)$ has poles at $x-z\in a_I$ and at $z\in b_J$, so the pole set of $(f\ostar g)(x)$ equals $a_I+b_J$.
\item[Part \ref{theorem item: pole order of convolution of two meromorphic 1-periodic functions}]
In the notations from part \ref{theorem item: pole set of convolution of two meromorphic 1-periodic functions},
suppose that $I,J=\N$ and that for $n\geq 1$, we have $\Im(a_n)<\Im(a_{n+1})$ and $\Im(b_n)<\Im(b_{n+1})$.
Let $x,y\in\R$. 
$(f\ostar g)(x+iy)$ is regular on 
\begin{equation}\label{ineq:}
\Im(a_1)+\Im(b_1)
<y
<\Im(a_2)+\Im(b_2) 
\:.
\end{equation}
Let $y=y_a+y_b$ where $\Im(a_1)<y_a<\Im(a_2)$ and $\Im(b_1)<y_b<\Im(b_2)$.
We have
\begin{equation*}
(f\ostar g)(x+iy)
=\int_0^1f(x-x'+iy_a)g(x'+iy_b)dx'
\:.
\end{equation*}
For $\eps>0$ small enough, 
the integral is regular for $y_a=\Im(a_2)-\eps$ and $y_b=\Im(b_2)-\eps$ ,
and we have
\begin{align*}
(f\ostar g)(x+iy)
\=\int_0^1f(x-x'+i(y_a-2\eps))g(x'+i(y_b+2\eps))dx'\\
&+2\pi i\:\Res_{x'+iy_b=b_2}\left[f(x-x'+iy_a)g(x'+iy_b)\right]
\end{align*}
Since $y=y_a+y_b=\Im(a_2)+\Im(b_2)-2\eps$ as before,
inequality (\ref{ineq:}) holds and the integral is regular. 
Moreover, it can be analytically continued to $y<\Im(a_2)+\Im(b_3)$.

By induction, $\sigma(f\ostar g)$ is given by the poles of its residues.
It suffices to show that for $f,g$ with $o_f+o_g\geq 1$, we have
\begin{displaymath}
o_{\Res_{w=w_0}[f(z-w)g(w)]}
\leq o_f+o_g-1
\:.
\end{displaymath}
This follows from the proof of Lemma \ref{lemma: The set of quasi-elliptic functions modulo C defines a filtered algebra w.r.t. the convolution}.
 \item[Part \ref{theorem item: filtration on K defined by the pole order}]
 This is a direct consequence of part \ref{theorem item: pole order of convolution of two meromorphic 1-periodic functions}.
\end{description}
\end{proof}

There is another variant of a convolution on $\SK$, which is regular on $\R$ though it is non-commutative and non-associative. 
For $f,g\in\SK$ and for $x\in\R$, we define
\begin{align*}
(f*_+g)(x)
=\lim_{\eps\searrow 0}\oint_{\Im(z)=-\eps}f(x-z)g(z)dz\:. 
\end{align*}
By abuse of notation, we continue to write $f*_+g$ for the analytic continuation. 

\begin{proposition}
Let $f,g\in\SK$.
For $\Im(x)>0$ sufficiently small, 
we have
\begin{equation*}
(f*_+g)(x)-(f\ostar_+ g)(x)
=2\pi i\Res_{z=0}[f(x-z)g(z)]
\:.
\end{equation*}
\end{proposition}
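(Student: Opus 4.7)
The plan is to realize the left-hand side as a closed contour integral and then apply the residue theorem. Both $(f\ostar_+g)(x)$ and $(f*_+g)(x)$ are integrals of $f(x-z)g(z)$ along horizontal segments of length $1$ in the $z$-plane --- at height $\Im(z)=\Im(x)/2$ for $\ostar_+$, and at height $\Im(z)=-\eps$ (in the limit $\eps\searrow 0$) for $*_+$. I would fix a small $\Im(x)>0$, choose $\eps>0$ much smaller, and consider the rectangle
\[
R=\{z\in\C:\ 0<\Re(z)<1,\ -\eps<\Im(z)<\Im(x)/2\}.
\]
Traversing $\partial R$ counterclockwise (lower side rightward, upper side leftward), the two vertical sides cancel by the $1$-periodicity of $f(x-z)g(z)$ in $z$, so what remains is exactly $(f*_+g)(x)-(f\ostar_+g)(x)$.

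By the residue theorem, this closed contour integral equals $2\pi i$ times the sum of residues of $f(x-z)g(z)$ inside $R$. The next step is to check that, for $\Im(x)$ sufficiently small, the only such pole is $z=0$. The poles of $g$ lie on $\Lambda$; of these only $z=0$ lies in the strip $-\eps<\Im(z)<\Im(x)/2$, since $\Im(\pm\tau)=\pm\Im(\tau)$ is outside whenever $\Im(x)<2\Im(\tau)$ and $\eps<\Im(\tau)$. The poles of $f(x-z)$ occur at $z=x-\lambda$ for $\lambda\in\Lambda$, with imaginary parts $\Im(x)+n\Im(\tau)$, $n\in\Z$; the case $n=0$ gives $\Im(z)=\Im(x)>\Im(x)/2$, above the strip, and $n=-1$ gives $\Im(z)=\Im(x)-\Im(\tau)<-\eps$ provided $\Im(x)<\Im(\tau)-\eps$. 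For all other $n$ the pole is manifestly outside. Hence only the pole at $z=0$ from $g$ contributes, yielding precisely the claimed term $2\pi i\Res_{z=0}[f(x-z)g(z)]$.

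The only real obstacle is the bookkeeping that no other pole sneaks into the rectangle; this is an elementary check relying on the hypothesis ``$\Im(x)$ sufficiently small'', and one may for instance take $0<\Im(x)<\Im(\tau)/2$ with $\eps<\Im(x)/2$. Otherwise the argument is a direct application of Cauchy's theorem, structurally identical to the contour deformation already carried out in the derivation of \eqref{eq: Delta(f ostar+ g) with Delta moved into first factor}, so no new technical ingredient is required.
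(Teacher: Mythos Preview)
Your proof is correct and follows essentially the same contour-deformation argument as the paper: both compare the two horizontal integrals by closing the contour (the paper phrases this as ``moving $\Im(z)$ from $-\eps$ to $\Im(x)/2$''), observe that $f(x-z)$ stays regular throughout while $g$ crosses its pole at $z=0$, and pick up the single residue there. One cosmetic point: with your choice $0<\Re(z)<1$ the pole $z=0$ sits on $\partial R$ rather than inside it, so you should shift the rectangle (e.g.\ take $-\tfrac{1}{2}<\Re(z)<\tfrac{1}{2}$), which is harmless by $1$-periodicity.
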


\begin{proof}
Let $\Im(x)=2\eps$, where $0<\eps<\frac{\Im(\tau)}{3}$.
Then the argument of $f$ in the integrand of $(f*_+g)(x)$ and of $(f\ostar_+ g)(x)$
has an imaginary part equal to $3\eps$ and $\eps$, respectively.
The two can be moved into one another without $f$ crossing a singularity.
On the other hand, by changing $\Im(z)$ from $-\eps$ to $\frac{\Im(x)}{2}=\eps$,
$g$ crosses a singularity at $z=0$.
\end{proof}

\subsection{The subspace of functions in $\SK$ with at most simple poles}

\begin{theorem}\label{theorem: V has basis given by Z oreg n} 
Let $V=\{f\in\SK|\:o_f\leq 1\}$, where $o_f$ is the maximal pole order of $f$.
\begin{enumerate}[a)]
\item\label{theorem item: V spanned by Z ostar n} 
$V$ is a complex vector space, and we have $V/\C=\Cspan\{\widehat{\mathscr{Z}^{\ostar_+ n}}|\:n=1,2,\ldots\}$.
\item\label{theorem item: Res Z ostar n} 
For $n\geq 1$, and for $m\in\N_0$, the residue of $\mathscr{Z}^{\ostar_+ n}(x)$ at $x=m\tau$ is
\begin{equation}\label{eq: Laurent series coefficient of Z oreg N to first order pole at u tau}
f^{[n]}_1(m)
=(-\Delta\mathscr{Z})^{n-1}\binom{m-1}{n-1}
\:.
\end{equation}
\end{enumerate}
\end{theorem}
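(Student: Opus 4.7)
My plan is to prove part b) first and derive part a) as a consequence.

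For part b) I would proceed by induction on $n$. The base case $n=1$ is immediate from $\mathscr{Z}=\zeta-e_2 x$ having residue $1$ at every lattice point, in agreement with $(-\Delta\mathscr{Z})^0\binom{m-1}{0}=1$. For the inductive step I would write $\mathscr{Z}^{\ostar_+ n}=\mathscr{Z}\ostar_+\mathscr{Z}^{\ostar_+(n-1)}$ and apply the product rule in eq.\ (\ref{eq: Delta(f ostar+ g) with Delta moved into first factor}). Because $\Delta\mathscr{Z}=-2\pi i$ is constant by eq.\ (\ref{eq: Z(x+m tau)-Z(x)}), the term $(\Delta\mathscr{Z})\ostar_+\mathscr{Z}^{\ostar_+(n-1)}$ reduces to $-2\pi i$ times the period integral of $\mathscr{Z}^{\ostar_+(n-1)}$, which is a fixed constant in $x$ (the contour at height $\Im(x)/2\in(0,\Im(\tau))$ lies in the regular strip). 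The residue contribution, computed using the simple pole of residue $1$ of $\mathscr{Z}$ at $z=\tau$, evaluates for generic $x\notin\Lambda$ to $2\pi i\:\mathscr{Z}^{\ostar_+(n-1)}(x)=(-\Delta\mathscr{Z})\mathscr{Z}^{\ostar_+(n-1)}(x)$.

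Passing to classes modulo $\C$ and applying $\sigma$ via eq.\ (\ref{eq: sigma commutes with Delta resp. Delta 1}), the first-order Laurent coefficient satisfies
\begin{equation*}
f^{[n]}_1(m+1)-f^{[n]}_1(m)=(-\Delta\mathscr{Z})\:f^{[n-1]}_1(m).
\end{equation*}
Inserting the induction hypothesis and using Pascal's identity $\binom{m}{n-1}-\binom{m-1}{n-1}=\binom{m-1}{n-2}$ identifies the general solution as $f^{[n]}_1(m)=(-\Delta\mathscr{Z})^{n-1}\binom{m-1}{n-1}+C_n$. To fix the constant I would invoke Theorem \ref{theorem: basic properties of the product ostar}, part \ref{theorem item: regularity}: since $\mathscr{Z}$ is regular on $0<\Im(x)<\Im(\tau)$, the $n$-fold convolution is regular on $0<\Im(x)<n\Im(\tau)$, so $f^{[n]}_1(k)=0$ for $k=1,\ldots,n-1$. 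Evaluating at $m=1$ (where $\binom{0}{n-1}=0$ for $n\geq 2$) forces $C_n=0$.

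Part a) then follows quickly. The set $V$ is a complex vector space since $o_f\leq 1$ is preserved under $\C$-linear combinations, and Theorem \ref{theorem: pole order of f ostar g leq o(f)+o(g)-1} gives $\mathscr{Z}^{\ostar_+ n}\in V$ inductively, via $1+1-1=1$. Restricting the isomorphism $\sigma:\SK/\C\to\sheaf{P}^{\N}$ of Theorem \ref{theorem: Struktursatz for SK functions}, part \ref{theorem item: isomorphism}, to $V/\C$ yields an isomorphism onto the subspace $\sheaf{P}^{[1]}$ of polynomials in $m$ supported on $\mathbf{e}_1$: elements of $V/\C$ have no higher Laurent coefficients, and conversely any preimage of an element of $\sheaf{P}^{[1]}$ has only simple poles. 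By part b), $\sigma(\widehat{\mathscr{Z}^{\ostar_+ n}})=(-\Delta\mathscr{Z})^{n-1}\binom{m-1}{n-1}\mathbf{e}_1$ is a polynomial of degree exactly $n-1$ in $m$ (the leading coefficient $(2\pi i)^{n-1}/(n-1)!$ is nonzero), so the family spans $\sheaf{P}^{[1]}$, and transporting back via $\sigma^{-1}$ furnishes the claimed basis of $V/\C$.

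The technical crux is the residue identification in the inductive step of b); the simplicity of the pole of $\mathscr{Z}$ at $\tau$ is precisely what keeps the recursion a clean scalar shift, rather than a differential recursion involving derivatives of $\mathscr{Z}^{\ostar_+(n-1)}$ at $x$.
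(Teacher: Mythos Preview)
Your proof is correct and takes a genuinely different route from the paper's.

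The paper proves part a) first, by induction on the $\Delta$-filtration: it shows $(V\cap K_N)/(V\cap K_{N-1})=\Cspan\{\widehat{\mathscr{Z}^{\ostar_+N}}\}$ by explicitly inverting $\Delta$ via eq.\ (\ref{eq: Delta Z ostar+ n}) together with the product rule $\Delta(fg)=(\Delta f)g+f(\Delta g)+(\Delta f)(\Delta g)$. Only then does it turn to part b): knowing from part a) and the structure theorem that $f^{[n]}_1$ is a polynomial of degree $\leq n-1$ vanishing at $m=1,\ldots,n-1$, it writes $f^{[n]}_1(m)\propto(n-1)!\binom{m-1}{n-1}$ and fixes the proportionality constant by computing $\Res_{x=\tau}[\Delta^{n-1}\mathscr{Z}^{\ostar_+n}]=f^{[n]}_1(n)$ via the binomial expansion of $\Delta^{n-1}$ in translations.

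You invert this logic: you establish b) directly by a first-order difference recursion on $f^{[n]}_1$ (Pascal's identity plus the regularity window to kill the integration constant), and then read off a) by transporting the basis $\{\binom{m-1}{n-1}\mathbf{e}_1\}_{n\geq 1}$ of $\sheaf{P}^{[1]}$ back through the isomorphism $\sigma$. Your approach has the advantage of making a) an immediate corollary of the structure theorem rather than a separate filtration argument, and your determination of the normalization in b) via a single vanishing value is lighter than the paper's iterated-$\Delta$ computation. The paper's ordering, on the other hand, emphasizes that the spanning statement in a) is really a statement about the $\Delta$-filtration being exhausted step by step, which is closer in spirit to the proof of Theorem~\ref{theorem: Struktursatz for SK functions}.\ref{theorem item: SK=SK1[Z]}.
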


\begin{proof}
Clearly $V$ is a $\C$ linear space. 
We show that $\mathscr{Z}^{\ostar_+n}$ for $n\geq 0$ define elements of $V$.
By $1$-periodicity, it suffices to consider the poles in $\tau\Z$.
For $n=0$ there is nothing to show. 
For $n=1$, we have $\mathscr{Z}\in\SK_2$ 
and by eq.\ (\ref{eq: Z(x+m tau)-Z(x)}),
the simple pole of $\mathscr{Z}$ at $x=0$ gives rise to a simple pole on every lattice point.
For $n\geq 2$, $\mathscr{Z}^{\ostar_+n}$ is regular at $x=m\tau$ for $1\leq m\leq n-1$,
by Corollary \ref{cor: n-fold convolution of functions in SK, written symmetrically}.
By part \ref{theorem item: pole order of convolution of two meromorphic 1-periodic functions} of Theorem \ref{theorem: pole order of f ostar g leq o(f)+o(g)-1},
we have $o_{\mathscr{Z}^{\ostar_+n}}\leq 1$.
Thus for $n\geq 0$, $\mathscr{Z}^{\ostar_+n}\in V$.
Conversely, we show that the corresponding classes of functions modulo $\C$ span $V/\C$. 
Let $\hat{f}\in V/\C$ satisfy $\Delta^k\hat{f}=0$. 
The case $k=0$ is trivial.
For $k=1$, eq.\ (\ref{eq: fiber of Delta over (Delta Z)(Z to the power of n)}) (with $n=0$)
yields $\hat{f}=a\hat{\mathscr{Z}}$ for some $a\in\C$.
Suppose that for $N\geq 1$,
\begin{equation*}
(V\cap K_N)/(V\cap K_{N-1})
=\Cspan\{\widehat{\mathscr{Z}^{\ostar_+N}}\}
\:.
\end{equation*}
By eq.\ (\ref{eq: Delta(f ostar+ g) with Delta moved into first factor}),
for $n\geq 1$,
\begin{equation}\label{eq: Delta Z ostar+ n}
\Delta\mathscr{Z}^{\ostar_+ (n+1)}(x)
=(\Delta\mathscr{Z})\left\{1\ostar_+\mathscr{Z}^{\ostar_+n}-\mathscr{Z}^{\ostar_+n}(x)\right\}
\:.
\end{equation}
For $f=\mathscr{Z}$ and $g=1\ostar_+\mathscr{Z}^{\ostar_+n}$,
the product rule $\Delta(fg)=(\Delta f)g+f(\Delta g)+(\Delta f)(\Delta g)$ yields $(\Delta f)g=\Delta(fg)$.
So $\Delta^{-1}\left((\Delta\mathscr{Z})\widehat{\mathscr{Z}^{\ostar_+N}}\right)\in\widehat{\mathscr{Z}^{\ostar_+ (N+1)}}+\C\widehat{\mathscr{Z}}$. 
This proves part \ref{theorem item: V spanned by Z ostar n}.

By Theorem \ref{theorem: Struktursatz for SK functions} and by part \ref{theorem item: V spanned by Z ostar n}, 
the Laurent coefficient $f^{[n]}_1(m)$ of the first order pole of $\mathscr{Z}^{\ostar_+ n}$ at position $m\tau$ 
is proportional to $\prod_{k=1}^{n-1}(m-k)=(n-1)!\binom{m-1}{n-1}$.
From eq.\ (\ref{eq: Delta Z ostar+ n}) follows by induction that for $n\geq 1$,
$\Delta^{n-1}\widehat{\mathscr{Z}^{\ostar_+ n}}(x)
=(-\Delta\mathscr{Z})^{n-1}\widehat{\mathscr{Z}}$.
On the other hand, for $m\geq 0$, $\Delta^{m}=\sum_{k=0}^{m}\binom{m}{k}(-1)^{m-k}T_{-k\tau}$, 
so
\begin{equation*}
(-\Delta\mathscr{Z})^{n-1}=\Res_{x=\tau}\left[\Delta^{n-1}\mathscr{Z}^{\ostar_+ n}(x)\right] 
=\Res_{x=0}\left[\mathscr{Z}^{\ostar_+n}(x+n\tau)\right] 
=f^{[n]}_1(n)
\:.
\end{equation*}
We conclude that the proportionality factor equals $(-\Delta\mathscr{Z})^{n-1}/(n-1)!$.
\end{proof}

For $n\geq 2$, we have $\Delta\mathscr{Z}^{\ostar_+(n+1)}=\Delta\mathscr{Z}^{\ostar_+n}\ostar_+\mathscr{Z}$ 
by eq.\ (\ref{eq: Delta(f ostar+ g) with Delta moved into first factor}),
so by comparison with eq.\ (\ref{eq: Delta Z ostar+ n}), we obtain
$(1\ostar_+\mathscr{Z}^{\ostar_+n})=(1\ostar_+\mathscr{Z})^n$, 
which is Fubini's Theorem.

\begin{cor}\label{cor: SK is a polynomial ring in wp and wp' with coefficients which are powers of Z, and alternative bases given}
The ring $\SK/\C$ is generated by any of the following classes of functions modulo $\C$:
\begin{enumerate}[a)]
 \item\label{cor item: derivatives of iterated convolutions of Z} 
 $\widehat{\frac{d^k}{dx^k}\mathscr{Z}^{\ostar_+ n}}$
  \item\label{cor item: derivatives of powers of Z}  
 $\widehat{\frac{d^k}{dx^k}\mathscr{Z}^{n}}$
 \item\label{cor item: powers of Z and their product with derivatives of Weierstrass p}  
 $\widehat{\mathscr{Z}^{n}}$, $\widehat{\mathscr{Z}^{n}\frac{d^k}{dx^k}\wp}$
 \item\label{cor item: derivatives of Weierstrass p and their product with tAn}  
 $\widehat{A_{n-1}\frac{d^k}{dx^k}\wp}$ \hspace{0.3cm}for $A_n$ defined by eq.\ (\ref{def: An}),
\end{enumerate}
where $n=1,2,\ldots$ and $k=0,1,2,\ldots$ 
\end{cor}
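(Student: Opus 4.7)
The plan is to verify each of (a)--(d) through the isomorphism $\sigma:\SK/\C\to\sheaf{P}^{\N}$ of Theorem~\ref{theorem: Struktursatz for SK functions}, part~\ref{theorem item: isomorphism}, combined with the intertwining $\sigma\circ\frac{d}{dx}=D\circ\sigma$, where $D$ sends $p_{\ell}\mathbf{e}_{\ell}$ to $-\ell\,p_{\ell}\mathbf{e}_{\ell+1}$. Since $\sheaf{P}^{\N}$ has $\C$-basis $\{m^d\mathbf{e}_{\ell}:d\geq 0,\,\ell\geq 1\}$, each case reduces to showing that the $\sigma$-images of the listed generators span this basis.

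Case (a) follows almost directly from Theorem~\ref{theorem: V has basis given by Z oreg n}: the identity $\sigma(\widehat{\mathscr{Z}^{\ostar_+ n}})=(-\Delta\mathscr{Z})^{n-1}\binom{m-1}{n-1}\mathbf{e}_1$, after applying $D^k$, produces a nonzero scalar multiple of $\binom{m-1}{n-1}\mathbf{e}_{k+1}$; since $\{\binom{m-1}{n-1}\}_{n\geq 1}$ is a basis of $\C[m]$, this already sweeps out all of $\sheaf{P}^{\N}$. Cases (b)--(d) are handled by a common triangular induction. The Laurent expansion $\mathscr{Z}(m\tau+u)=u^{-1}-2\pi im+O(u)$ gives $\sigma(\widehat{\mathscr{Z}^n})=\sum_{j=1}^{n}q_{n,j}(m)\mathbf{e}_j$ with $q_{n,j}$ of degree exactly $n-j$ and nonzero leading coefficient. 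For (b), applying $D^k$ shifts $\mathbf{e}_j\mapsto\mathbf{e}_{j+k}$ with a nonzero scalar, and induction on the bigrading $\ell+d$ produces every $m^d\mathbf{e}_{\ell}$: start from $D^{\ell-1}\sigma(\widehat{\mathscr{Z}^{d+1}})$ and correct the strictly lower-bigrading contributions by the inductive hypothesis. For (c), the additional generators $\widehat{\mathscr{Z}^n\frac{d^k}{dx^k}\wp}$ have leading term $(-1)^k(k+1)!\,\mathbf{e}_{n+k+2}$ (constant in $m$, since $\frac{d^k}{dx^k}\wp$ is elliptic) with subleading contributions obtained by direct Laurent multiplication, extending the inductive sweep. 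For (d), equation~(\ref{eq: tAk as expansion on Z}) gives $A_{n-1}=\frac{1}{n-1}\mathscr{Z}^{n-1}+O(\mathscr{Z}^{n-2})$ for $n\geq 2$ together with $A_0=1$, so the $\C$-span of $\{\widehat{A_{n-1}\frac{d^k}{dx^k}\wp}\}_{n\geq 1,\,k\geq 0}$ coincides with that of $\{\widehat{\mathscr{Z}^{n-1}\frac{d^k}{dx^k}\wp}\}_{n\geq 1,\,k\geq 0}$, reducing (d) to (c), with the pure elliptic classes $\widehat{\frac{d^k}{dx^k}\wp}$ now supplied through $A_0$.

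The hard part will be the triangular bookkeeping in (b)--(d): one must verify that every subleading correction term lands at strictly lower bigrading $(\ell,d)$ so that the inductive hypothesis applies, and must handle the small-pole boundary carefully---confirming in particular that the low-pole elliptic classes $\widehat{\frac{d^k}{dx^k}\wp}$ can indeed be synthesized from the generators available in each case, which is most transparent through the reduction of (d) to (c).
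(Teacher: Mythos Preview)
Your overall strategy---pushing everything through the isomorphism $\sigma$ and checking that the images span $\sheaf{P}^{\N}$---is sound, and for (a) it matches the paper's one-line appeal to Theorem~\ref{theorem: V has basis given by Z oreg n}. For (b)--(d), however, the paper takes a much shorter structural route: part~(c) is read off directly from $\SK=\SK_1[\mathscr{Z}]$ (Theorem~\ref{theorem: Struktursatz for SK functions}\ref{theorem item: SK=SK1[Z]}) together with the classical fact that $\SK_1/\C$ is spanned by the $\widehat{\wp^{(k)}}$; part~(b) is then deduced from (c); and part~(d) follows from (c) plus $\SK=\Cspan\{A_n\}\cdot\SK_1$. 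No bigraded induction is needed.

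More importantly, your inductive scheme for (b) has a concrete error. You propose to obtain $m^d\mathbf{e}_{\ell}$ from $D^{\ell-1}\sigma(\widehat{\mathscr{Z}^{d+1}})$ and then ``correct the strictly lower-bigrading contributions''. But compute the support: $\sigma(\widehat{\mathscr{Z}^{d+1}})=\sum_{j=1}^{d+1}q_j(m)\mathbf{e}_j$ with $\deg q_j=d+1-j$, so after $D^{\ell-1}$ the $\mathbf{e}_{\ell'}$-coefficient (for $\ell'=j+\ell-1$) has $m$-degree exactly $d+1-j=(\ell+d)-\ell'$. Every correction term therefore sits at the \emph{same} total bigrading $\ell'+d'=\ell+d$, not a strictly lower one. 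The induction on $\ell+d$ alone does not close; the very thing you flag as ``the hard part'' to verify is in fact false as stated.

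The fix is easy: refine the ordering. Within a fixed level $N=\ell+d$ do a secondary downward induction on $d$ (equivalently upward on $\ell$): the corrections from $D^{\ell-1}\widehat{\mathscr{Z}^{d+1}}$ land at $(\ell',d')$ with $\ell'+d'=N$ and $d'<d$, and the base case $d=0$ is supplied exactly by $D^{N-1}\widehat{\mathscr{Z}}$. With this repaired ordering your argument for (b) goes through; but note that the paper's derivation of (b) from (c) avoids this bookkeeping entirely.
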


\begin{proof}
Part \ref{cor item: derivatives of iterated convolutions of Z} 
is a consequence of Theorem \ref{theorem: V has basis given by Z oreg n}.
Part \ref{cor item: powers of Z and their product with derivatives of Weierstrass p}
is a reformulation of Theorem \ref{theorem: Struktursatz for SK functions}.\ref{theorem item: SK=SK1[Z]},
and implies part \ref{cor item: derivatives of powers of Z}.
Part \ref{cor item: derivatives of Weierstrass p and their product with tAn} 
follows from part \ref{cor item: powers of Z and their product with derivatives of Weierstrass p}
and from the fact that $\SK=\Cspan\{A_n|\:n\geq 0\}\cdot\SK_1$, cf.\ the proof of Theorem \ref{theorem: Struktursatz for SK functions}.
\end{proof}

\section{Application}\label{section: Application}

\subsection{Relation to the set of integration kernels from eq.\ (\ref{def: elliptic polylogarithm})}

For $x,y\in\C$, let
\begin{equation}\label{def: F(x,y)}
F(x,y)
=\frac{\vartheta'(0)\vartheta(x+y)}{\vartheta(x)\vartheta(y)}
\:,
\end{equation}
where $\vartheta(x)$ is the Jacobi theta function \cite[p.\ 311]{K:1881} and \cite{Z:1991}.
The function $F(x,y)$ was called Eisenstein-Kronecker function by \cite{BDDT:2018}, with good justification \cite{Levin:1997}.
In the following we fix $y\not\in\Lambda$ and consider $F(x,y)$ as a function of $x$.
$F(x,y)$ is $1$-periodic and satisfies
\begin{equation*}
F(x+\tau,y)
=F(x,y)\exp\left((\Delta\mathscr{Z})y\right) 
\:.
\end{equation*}
$F(x,y)$ has a simple pole at $x=m\tau+s$ ($m,s\in\Z$) of residue $\exp((\Delta\mathscr{Z})my)$ 
and is holomorphic on $\C\setminus\Lambda$ \cite{Z:1991}.
Let $g^{(n)}(x)$ be a  generating sequence of $F(x,y)$,
\begin{displaymath}
F(x,y)
=\frac{1}{y}\sum_{n\geq 0}g^{(n)}(x)y^n
\:.
\end{displaymath}
The properties mentioned for $F(x,y)$ imply 
that $g^{(n)}(x)$ for $n\geq 0$ is $1$-periodic and satisfies
\begin{equation}\label{eq: Delta g(k) as sum over g(k-ell)}
\Delta g^{(n)}(x)
=\sum_{k=1}^{n}\frac{(\Delta\mathscr{Z})^k}{k!}g^{(n-k)}(x)
\:.
\end{equation}
Moreover, for $n\geq 1$ and $m\in\Z$, 
the residue of $g^{(n)}(x)$ at $x=m\tau$ is
\begin{equation}\label{eq: residue of g(n) at m tau}
g_1^{[n]}(m)
=\frac{m^{n-1}}{(n-1)!}(\Delta\mathscr{Z})^{n-1}
\:.
\end{equation}
Note that $g^{(n)}$ is regular on $\R$, except for $n=1$.\footnote{\label{0 to power 0}We use the convention $0^0=1$.}
The first few functions are given by
\begin{align*}
g^{(0)}(x)
\=1\\
g^{(1)}(x)
\=\mathscr{Z}(x)\\
g^{(2)}(x)
\=\frac{1}{2}\mathscr{Z}(x)^2-\frac{1}{2}\wp(x)\\
g^{(3)}(x)
\=\frac{1}{6}\mathscr{Z}(x)^3
-\frac{1}{2}\wp(x)\mathscr{Z}(x)
-\frac{1}{6}\frac{d}{dx}\wp(x)
\:.
\end{align*}
The functions $g^{(n)}$ form a set of linearly independent integration kernels
for constructing elliptic polylogarithms as iterated integrals \cite{BDDT:2018}. 
For low $n\geq 2$, it is easy to verify from the above formulae that all poles on $\R$ drop out.
A better adapted approach, however, is to start from the equation
\begin{equation}\label{eq: g(n) expressed in terms of Z ostar k}
g^{(n)}(x)
=\sum_{k=0}^nC^{[n]}_k\mathscr{Z}^{\ostar_+k}(x)
\:
\end{equation}
for suitable numbers $C^{[n]}_k\in\C$,
and determine the sequence of polynomials 
\begin{equation}\label{eq: pn(x)}
p_n(x)
=\sum_{k=1}^nC^{[n]}_kx^k
\:,
\end{equation}
for $n\geq 1$, using the ordinary multiplicative structure on $\mathscr{Z}^{\ostar_+k}$.

\begin{theorem}\label{theorem: transformation formula from the g(k) to Z ostar n}
For $n\geq 0$, $g^{(n)}$ define elements in $V\cap\SK_{n+1}$.
\begin{enumerate}
\item 
For $n\geq 1$, 
we have
\begin{equation}\label{eq: writing convolutions of Z in terms of the g(k)}
\mathscr{Z}^{\ostar_+n}(x)
=\sum_{k=0}^nc^{[n]}_{k}g^{(k)}(x)
\:.
\end{equation}
The coefficients for $1\leq k\leq n$ are given by
\begin{equation*}
c^{[n]}_k
=(-1)^{n-1}(\Delta\mathscr{Z})^{n-k}\frac{(k-1)!}{(n-1)!}\:s(n,k)
\:,
\end{equation*}
where $s(n,k)$ are the signed Stirling numbers of the first kind.
For $k=0$, we have $c^{[0]}_0=1$ and
\begin{equation*}
c^{[n]}_{0}
=(\Delta\mathscr{Z})^n\left\{\frac{1}{2^n}
+\frac{(-1)^{n+1}}{n!}\sum_{m=1}^{n+1}\frac{s(n+1,m)}{m}\right\}\:,
\quad n\geq 1
\:.
\end{equation*}
\item
For $n\geq 0$, the inverse transformation (\ref{eq: g(n) expressed in terms of Z ostar k})
is given, for $1\leq k\leq n$, by
\begin{equation*}
C^{[n]}_k
=(-1)^{k-1}(\Delta\mathscr{Z})^{n-k}\frac{(k-1)!}{(n-1)!}S(n,k)
\:.
\end{equation*}
Here $S(n,k)$ are the Stirling numbers of the second kind.
A closed formula is
\begin{equation*}
C^{[n]}_k
=\frac{(-1)^{k-1}}{(n-1)!}(\Delta\mathscr{Z})^{n-k}
\sum_{t=0}^{k-1}(-1)^t(k-t)^{n-1}\binom{k-1}{t}
\:.
\end{equation*} 
The generating function for the associated sequence $p_n$ of polynomials eq.\ (\ref{eq: pn(x)})
for $n\geq 1$ is given by
\begin{equation}\label{def: generating function of the pn}
G(x,y)
:=\frac{x\cdot\exp(y)}{1-x+x\cdot\exp(y)}
\:.
\end{equation}
Moreover, we have
\begin{equation*}
C^{[n]}_0
=\frac{(\Delta\mathscr{Z})^n}{n!}2B_{n}(1-2^{n-1})
\:,
\quad
n\geq 0
\:.
\end{equation*}
(Here $B_0=1$ and for $n\geq 2$, $B_n$ is the $n$\scr{th} Bernoulli number.)
\end{enumerate}
\end{theorem}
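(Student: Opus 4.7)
The strategy is to identify both families of coefficients by matching the residues of $\mathscr{Z}^{\ostar_+n}$ and $g^{(k)}$ at the lattice points $m\tau$, which converts each direction of the transformation into a polynomial identity in $m$ involving Stirling numbers. First I would verify $g^{(n)}\in V\cap\SK_{n+1}$: the residue formula (\ref{eq: residue of g(n) at m tau}) exhibits only simple poles so $g^{(n)}\in V$, while iterating (\ref{eq: Delta g(k) as sum over g(k-ell)}) gives $\Delta^{n+1}g^{(n)}=0$ by induction on $n$. Combined with Theorem \ref{theorem: V has basis given by Z oreg n}, both $\{g^{(k)}\}_{k\ge 0}$ and $\{\mathscr{Z}^{\ostar_+k}\}_{k\ge 0}$ form bases (modulo constants) of $V\cap\SK_{n+1}$, so the decompositions (\ref{eq: writing convolutions of Z in terms of the g(k)}) and (\ref{eq: g(n) expressed in terms of Z ostar k}) exist and are unique.

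Matching residues at $x=m\tau$ for $\mathscr{Z}^{\ostar_+n}=\sum_{k=0}^n c^{[n]}_k g^{(k)}$ via Theorem \ref{theorem: V has basis given by Z oreg n}.\ref{theorem item: Res Z ostar n} and (\ref{eq: residue of g(n) at m tau}) yields
\begin{equation*}
(-\Delta\mathscr{Z})^{n-1}\binom{m-1}{n-1} = \sum_{k=1}^n c^{[n]}_k\frac{m^{k-1}}{(k-1)!}(\Delta\mathscr{Z})^{k-1},
\end{equation*}
and comparing coefficients of $m^{k-1}$ using $(m-1)(m-2)\cdots(m-n+1)=\sum_{k=1}^n s(n,k)m^{k-1}$ (obtained from the defining relation $m(m-1)\cdots(m-n+1)=\sum_k s(n,k)m^k$ by dividing by $m$, using $s(n,0)=0$) gives the stated closed form for $c^{[n]}_k$. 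The inverse direction is parallel: matching residues for $g^{(n)}=\sum_k C^{[n]}_k\mathscr{Z}^{\ostar_+k}$ produces
\begin{equation*}
\frac{m^{n-1}}{(n-1)!}(\Delta\mathscr{Z})^{n-1} = \sum_{k=1}^n C^{[n]}_k(-\Delta\mathscr{Z})^{k-1}\binom{m-1}{k-1},
\end{equation*}
and the dual identity $m^{n-1}=\sum_{k=1}^n S(n,k)(m-1)(m-2)\cdots(m-k+1)$ (verified via the EGF $\sum_n S(n,k)z^n/n!=(e^z-1)^k/k!$) yields the $S(n,k)$ formula for $C^{[n]}_k$, with the alternating closed form following from $k!S(n,k)=\sum_t(-1)^t\binom{k}{t}(k-t)^n$.

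For the generating function I would expand $G(x,y)=xe^y\sum_{n\ge 0}(-x(e^y-1))^n=\sum_{k\ge 1}(-1)^{k-1}x^ke^y(e^y-1)^{k-1}$ geometrically, then apply $e^y(e^y-1)^{k-1}=\sum_{t=0}^{k-1}(-1)^{k-1-t}\binom{k-1}{t}e^{(k-t)y}$ and read off the coefficient of $y^{n-1}$, which reproduces precisely the alternating sum $\sum_t(-1)^t\binom{k-1}{t}(k-t)^{n-1}$ entering $C^{[n]}_k$ (after the appropriate normalization of $\Delta\mathscr{Z}$). The constant terms $c^{[n]}_0$ and $C^{[n]}_0$ are each determined by one piece of data beyond the principal parts; my plan is to integrate both sides along the real period, using that $g^{(k)}$ is regular on $\R$ for $k\ge 2$ (so $\oint g^{(k)}=0$) while $\oint g^{(0)}=1$ and $\oint g^{(1)}=\oint\mathscr{Z}$ is known, reducing everything to computing $\oint\mathscr{Z}^{\ostar_+n}$. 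The latter I would obtain by re-expressing $\mathscr{Z}^{\ostar_+n}$ via the $A_n$ structure (\ref{def: An}) and integrating the resulting Bernoulli polynomials, producing $B_n(1-2^{n-1})$ for $C^{[n]}_0$ and the integral $\sum_{m=1}^{n+1}s(n+1,m)/m=\int_0^1(x-1)(x-2)\cdots(x-n)dx$ for $c^{[n]}_0$. I expect this constant-term bookkeeping—matching the integrated convolution with the prescribed Bernoulli–Stirling expressions and tracking signs and scale factors—to be the main obstacle, whereas the residue-matching and the Stirling identities are essentially mechanical.
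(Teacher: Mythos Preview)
Your residue-matching argument for the coefficients $c^{[n]}_k$ and $C^{[n]}_k$ with $1\le k\le n$ is correct and is exactly what the paper does. Your derivation of the generating function $G(x,y)$ by geometric expansion is a legitimate alternative to the paper's route (which instead sums $\sum_{n,k}(-1)^{k-1}\frac{(k-1)!}{(n-1)!}S(n,k)x^ky^{n-1}$ using the Stirling recursion and the EGF $(e^y-1)^m=m!\sum_n S(n,m)y^n/n!$); both arrive at the same closed form.

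There is, however, a genuine gap in your treatment of the constant terms. The assertion ``$g^{(k)}$ is regular on $\R$ for $k\ge 2$, so $\oint g^{(k)}=0$'' is false: regularity on the real period in no way forces the period integral to vanish. In fact the paper proves
\[
\int_0^1 g^{(\ell)}(x)\,dx=\frac{B_\ell}{\ell!}(\Delta\mathscr{Z})^{\ell}\qquad(\ell\ge 2),
\]
which is nonzero for every even $\ell$; this is established by integrating the recursion (\ref{eq: Delta g(k) as sum over g(k-ell)}) and recognising the Bernoulli recursion $\sum_{\ell=0}^{k-1}\binom{k}{\ell}B_\ell=0$. With your false premise the entire constant-term bookkeeping collapses. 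Relatedly, your plan to compute $\oint\mathscr{Z}^{\ostar_+n}$ ``via the $A_n$ structure'' is on the wrong track: the $A_n$ are polynomials in $\mathscr{Z}$ under pointwise multiplication, not convolution powers, and there is no direct passage from one to the other. The paper instead observes (via Fubini, cf.\ the remark after Theorem \ref{theorem: V has basis given by Z oreg n}) that $1\ostar_+\mathscr{Z}^{\ostar_+k}=(1\ostar_+\mathscr{Z})^k=(\Delta\mathscr{Z}/2)^k$, which gives $\oint\mathscr{Z}^{\ostar_+k}$ immediately. For $C^{[n]}_0$ the paper then evaluates $p_n(\tfrac12)$ from $G(\tfrac12,y)=1+\frac{2}{e^{2y}-1}-\frac{1}{e^y-1}$ and combines it with the $g^{(\ell)}$ integrals above. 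For $c^{[n]}_0$ the paper does not integrate at all: it applies $\Delta$ to (\ref{eq: writing convolutions of Z in terms of the g(k)}) at level $n+1$, uses (\ref{eq: Delta g(k) as sum over g(k-ell)}) and (\ref{eq: Delta Z ostar+ n}), and reads off
\[
c^{[n]}_0=\Bigl(\tfrac{\Delta\mathscr{Z}}{2}\Bigr)^n-\sum_{k=1}^{n+1}c^{[n+1]}_k\frac{(\Delta\mathscr{Z})^{k-1}}{k!}.
\]
Your observation that $\sum_m s(n+1,m)/m=\int_0^1(x-1)\cdots(x-n)\,dx$ is correct as an identity, but you have not shown how it would emerge from a valid computation once the erroneous $\oint g^{(k)}=0$ is removed.
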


\begin{proof}
We have $F(x,y)\in V$, so $g^{(n)}\in V$ for $n\geq 0$.
The fact that $g^{(n)}\in K_{n+1}$ follows from the comparison with $\mathscr{Z}^{\ostar_+n}$.
\begin{enumerate}
 \item 
Comparing the residues from eqs (\ref{eq: Laurent series coefficient of Z oreg N to first order pole at u tau}) and (\ref{eq: residue of g(n) at m tau})
shows that
\begin{equation*}
s(n,k)
=(-1)^{n-1}(\Delta\mathscr{Z})^{k-n}\frac{(n-1)!}{(k-1)!}c^{[n]}_{k}
\:
\end{equation*}
for $1\leq k\leq n$
satisfy equation defining the signed Stirling numbers of the first kind \cite[p.\ 824]{AS:1965}
\begin{equation}\label{eq: definition of the generating function of the Stirling numbers of the first kind}
(m-1)\ldots(m-n+1)
=\sum_{k=1}^ns(n,k)m^{k-1}
\:.
\end{equation}
In particular, we have 
\begin{equation}\label{eq: matrix equation for transforming the Delta g into the ostar Delta convolutions of Z}
\Delta{\mathscr{Z}^{\ostar_+(n+1)}}(x)
=\sum_{k=1}^{n+1}c^{[n+1]}_{k}\Delta{g^{(k)}}(x)
\:.
\end{equation}
For $n=1$ we obtain $(1\ostar_+\mathscr{Z})=(\Delta\mathscr{Z})/2$ by comparison with eq.\ (\ref{eq: Delta Z ostar+ n}).
Solving the latter equation for $\mathscr{Z}^{\ostar_+n}$
and using eqs (\ref{eq: Delta g(k) as sum over g(k-ell)}) and (\ref{eq: matrix equation for transforming the Delta g into the ostar Delta  convolutions of Z}),
yields
\begin{equation*}
\mathscr{Z}^{\ostar_+n}(x)
=\sum_{j=0}^n\left(\left(\frac{\Delta\mathscr{Z}}{2}\right)^n\delta_{j0}
-\sum_{k=j+1}^{n+1}c^{[n+1]}_{k}\frac{(\Delta\mathscr{Z})^{k-j-1}}{(k-j)!}\right)g^{(j)}(x)
\:.
\end{equation*}
This shows 
\begin{equation*}
c^{[n]}_{0}
=\left(\frac{\Delta\mathscr{Z}}{2}\right)^n
-\sum_{k=1}^{n+1}c^{[n+1]}_{k}\frac{(\Delta\mathscr{Z})^{k-1}}{k!}
\:.
\end{equation*}
\item
The coefficients $C^{[n]}_k$ for $1\leq k\leq n$ of the inverse transformation are obtained by comparing residues in a similar way as above.
Thus 
\begin{equation*}
G(x,y)
=\sum_{n=1}^{\infty}p_n(x)y^{n-1}
=\sum_{n=1}^{\infty}\sum_{k=1}^{\infty}(-1)^{k-1}\frac{(k-1)!}{(n-1)!}x^ky^{n-1}S(n,k)
\:.
\end{equation*}
Here $S(n,k)=0$ for $k>n$, so the inner sum on the r.h.s.\ is actually finite.
Exchanging the order of summation, inserting $S(n,k)=kS(n-1,k)+S(n-1,k-1)$ for $1\leq k\leq n$ \cite[p.\ 825]{AS:1965} and performing an index shift, 
yields
\begin{equation*}
G(x,y)
=\sum_{k=1}^{\infty}(-1)^{k-1}x^k\sum_{n=0}^{\infty}\frac{y^n}{n!}\left(k!S(n,k)+(k-1)!S(n,k-1)\right)
\end{equation*}
Using $m!\sum_{n=m}^{\infty}S(n,m)\frac{y^n}{n!}=(e^y-1)^m$ \cite[p.\ 824]{AS:1965},
where as before the summation can be extended to $n\geq 0$ without change,
we obtain
\begin{equation*}
G(x,y)
=xe^y\sum_{k=1}^{\infty}(-x)^{k-1}(e^y-1)^{k-1}
=\frac{x\cdot\exp(y)}{1-x+x\cdot\exp(y)}
\:.
\end{equation*}
By eq.\ (\ref{eq: g(n) expressed in terms of Z ostar k}),
\begin{equation*}
g^{(n)}(x)-\sum_{k=1}^nC^{[n]}_k\mathscr{Z}^{\ostar_+k}(x)
=C(n,0)
\:.
\end{equation*}
The r.h.s.\ is invariant under integration over the real period, so
\begin{equation*}
\sum_{k=1}^nC^{[n]}_k\left(\mathscr{Z}^{\ostar_+k}-\lim_{\eps\searrow 0}\int_{i\eps}^{1+i\eps}\mathscr{Z}^{\ostar_+k}(x)dx\right)
=g^{(n)}-\lim_{\eps\searrow 0}\int_{i\eps}^{1+i\eps}g^{(n)}(x)dx
\:.
\end{equation*}
For $k\geq 1$, the $\mathscr{Z}^{\ostar_+k}$ integral equals $\left(\Delta\mathscr{Z}/2\right)^k$.
(Note that this also settles the $g^{(n)}$ integral for $n=1$.)
We are lead to computing $p_n(\Delta\mathscr{Z}/2)$ for $n\geq 1$.
We have 
$p_1(\Delta\mathscr{Z}/2)
=(\Delta\mathscr{Z})/2$, 
and for $n\geq 2$,
\begin{equation}\label{eq: pn((Delta Z)/2)}
p_{n}\left(\frac{\Delta\mathscr{Z}}{2}\right)
=(-\Delta\mathscr{Z})^n\frac{(2^n-1)}{n!}B_n
\:.
\end{equation}
Indeed,
\begin{equation*}
G\left(\frac{1}{2},y\right)
=1+\frac{2}{e^{2y}-1}-\frac{1}{e^y-1}
\:.
\end{equation*}
Since $\mathscr{Z}$ has residue equal to one at $x=0$, 
we have for $0<\eps<\Im(\tau)$,
\begin{equation}\label{eq: Z integral along the real p[eriod in the lower half plane}
\int_{-i\eps}^{1-i\eps}\mathscr{Z}(x)dx 
=-\frac{(\Delta\mathscr{Z})}{2} 
\:.
\end{equation}
We prove
\begin{equation}\label{eq: identity between g(k+1) integral and Bernoulli number}
\int_0^1g^{(\ell)}(x)dx
=\frac{B_{\ell}}{\ell!}(\Delta\mathscr{Z})^{\ell}\:,
\quad\ell\geq 2\:,
\end{equation}
by referring to the recursion relation for the Bernoulli numbers
\begin{equation}\label{eq: recursion relation for the Bernoulli numbers}
\sum_{\ell=0}^{k-1}\binom{k}{\ell}B_{\ell}
=0
\:,\quad k\geq 2\:,
\end{equation}
(for $k=1$, we have $B_0=1$).
For $k\geq 1$, 
we have by eq.\ (\ref{eq: Delta g(k) as sum over g(k-ell)}),
\begin{equation*}
\frac{k!}{(\Delta\mathscr{Z})^{k}}\Delta g^{(k)}(x)
=\sum_{\ell=0}^{k-1}\binom{k}{\ell}\frac{\ell!}{(\Delta\mathscr{Z})^{\ell}}g^{(\ell)}(x)
\:.
\end{equation*}
Moreover, for $k\geq 2$, 
\begin{equation*}
\int_{-i\Im(\tau)/2}^{1-i\Im(\tau)/2}\Delta g^{(k)}(x)dx
=2\pi i\Res_{x=0}\left[g^{(k)}(x)\right]
=0
\end{equation*}
by $1$-periodicity and regularity of $g^{(k)}(x)$ on $\R$.
So  for $k\geq 2$, we have the recursion relation
\begin{equation*}
0
=\sum_{\ell=0}^{k-1}\binom{k}{\ell}\frac{\ell!}{(\Delta\mathscr{Z})^{\ell}}\int_{-i\Im(\tau)/2}^{1-i\Im(\tau)/2}g^{(\ell)}(x)dx
\:.
\end{equation*}
(For $k=1$, the summand for $\ell=0$ equals $1$.)
By comparison with the rescursion relation (\ref{eq: recursion relation for the Bernoulli numbers}) 
and because of the identity (\ref{eq: Z integral along the real p[eriod in the lower half plane}), where $B_1=-\frac{1}{2}$,
we conclude that eq.\ (\ref{eq: identity between g(k+1) integral and Bernoulli number}) is true.
Eqs (\ref{eq: identity between g(k+1) integral and Bernoulli number}) and (\ref{eq: pn((Delta Z)/2)}) determine the additive constant in eq.\ (\ref{eq: g(n) expressed in terms of Z ostar k}).
\end{enumerate}
\end{proof}

The first few functions are given by $\mathscr{Z}=g^{(1)}$ and 
\begin{align*}
\mathscr{Z}^{\ostar_+2} 
\=-g^{(2)}+(\Delta\mathscr{Z})g^{(1)}
-\frac{(\Delta\mathscr{Z})^2}{6}\\
\mathscr{Z}^{\ostar_+3}
\=g^{(3)}-\frac{3}{2}(\Delta\mathscr{Z})g^{(2)}+(\Delta\mathscr{Z})^2g^{(1)}
-\frac{(\Delta\mathscr{Z})^3}{4}\nn\\
\mathscr{Z}^{\ostar_+4}
\=-g^{(4)}+2(\Delta\mathscr{Z})g^{(3)}-\frac{11}{6}(\Delta\mathscr{Z})^2g^{(2)}+(\Delta\mathscr{Z})^3g^{(1)}
-\frac{103}{360}(\Delta\mathscr{Z})^4\nn\\
\mathscr{Z}^{\ostar_+5}
\=g^{(5)}-\frac{5}{2}(\Delta\mathscr{Z})g^{(4)}+\frac{35}{12}(\Delta\mathscr{Z})^2g^{(3)}-\frac{25}{12}(\Delta\mathscr{Z})^3g^{(2)}+(\Delta\mathscr{Z})^4g^{(1)}
-\frac{43}{144}(\Delta\mathscr{Z})^5\nn
\end{align*}
respectively
\begin{align*}
g^{(2)}
\=-\mathscr{Z}^{\ostar_+2}
+(\Delta\mathscr{Z})\mathscr{Z}
-\frac{1}{6}(\Delta\mathscr{Z})^2\\
g^{(3)}
\=\mathscr{Z}^{\ostar_+3}
-\frac{3}{2}(\Delta\mathscr{Z})\mathscr{Z}^{\ostar_+2}
+\frac{1}{2}(\Delta\mathscr{Z})^2\mathscr{Z}\\
g^{(4)}
\=-\mathscr{Z}^{\ostar_+4}
+2(\Delta\mathscr{Z})\mathscr{Z}^{\ostar_+3}
-\frac{7}{6}(\Delta\mathscr{Z})^2\mathscr{Z}^{\ostar_+2}
+\frac{1}{6}(\Delta\mathscr{Z})^3\mathscr{Z}
+\frac{7}{360}(\Delta\mathscr{Z})^4\\
g^{(5)}
\=\mathscr{Z}^{\ostar_+5}
-\frac{5}{2}(\Delta\mathscr{Z})\mathscr{Z}^{\ostar_+4}
+\frac{25}{12}(\Delta\mathscr{Z})^2\mathscr{Z}^{\ostar_+3}
-\frac{5}{8}(\Delta\mathscr{Z})^3\mathscr{Z}^{\ostar_+2}
+\frac{1}{24}(\Delta\mathscr{Z})^4\mathscr{Z}
\:.
\end{align*}


\subsection{The associated convolution polynomials}

We consider the sequence of polynomials (\ref{eq: pn(x)}) 
which is naturally associated to the set of transformations (\ref{eq: g(n) expressed in terms of Z ostar k}).
To simplify notations, we set $\Delta\mathscr{Z}=1$. 

\begin{proposition}
For $n\geq 1$, let $p_n(x)\in\Q[x]$ be the sequence of polynomials 
\begin{equation*}
 p_n(x)
 =\sum_{k=1}^n(-1)^{k-1}\frac{(k-1)!}{(n-1)!}S(n,k)x^k
 \:,
\end{equation*}
where $S(n,k)$ are the Stirling numbers of the second kind.
\begin{enumerate}
\item\label{symmetry} We have $p_1(x)=x$, $p_2(x)=x(1-x)$ and 
\begin{equation}\label{eq: symmetry resp. antisymmetry of pn(x) under x mapsto 1-x}
p_n(1-x)
=(-1)^np_n(x)\:,\quad n\geq 2\:.
\end{equation}
\item For $n\geq 1$, $p_n(x)$ has $n$ simple zeroes $x_j^{[n]}$, $j=1,\ldots,n$, in the interval $[0,1]$ so that
$x_1^{[n]}=0$, $x_n^{[n]}=1$ and $x_j^{[n]}<x_{j+1}^{[n+1]}<x_{j+1}^{[n]}$
for $j=1,\ldots,n-1$.
Thus the zeroes of $p_n$ and of $p_{n+1}$ are interlaced.
\item\label{item: rho(x)} The distribution of zeroes $x\in(0,1)$ of $p_n$ for large $n$ is governed by the density function $\rho:(0,1)\rechts\R$
defined by
\begin{equation*}
\rho(x)
=\frac{1}{x(1-x)\:|y_0|^2}
\:.
\end{equation*}
Here $y_0=\Re\left(\log(x-1)-\log x\right)+i\pi$.
\item 
For $x\in(0,1)$ and for $n\geq 2$, 
we have
\begin{equation*}
p_{n}(x)
=\frac{-1}{(2\pi i)^n}\left\{\zeta\left(n,\frac{y_0}{2\pi i}\right)+(-1)^n\zeta\left(n,-\frac{y_0}{2\pi i}-1\right)\right\}
\:. 
\end{equation*}
Here $y_0$ is as in part \ref{item: rho(x)}, and $\zeta(n,a)$ is the Hurwitz zeta function.
\end{enumerate}
\end{proposition}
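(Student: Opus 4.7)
The plan is to extract every assertion from the generating function
\[
G(x,y) = \sum_{n\geq 1} p_n(x)\, y^{n-1} = \frac{xe^y}{1 - x + xe^y}
\]
obtained in Theorem \ref{theorem: transformation formula from the g(k) to Z ostar n}, together with the companion ODE $\partial_y G = G(1-G)$, which is immediate by differentiation.

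For part 1, evaluation at $y=0$ gives $p_1(x) = G(x,0) = x$ and $p_2(x) = \partial_y G|_{y=0} = G(1-G)|_{y=0} = x(1-x)$. The symmetry follows from the clean algebraic identity $G(x,y) + G(1-x,-y) = 1$, which is verified by putting both terms over the common denominator $1-x+xe^y$. Comparing the coefficient of $y^{n-1}$ on both sides yields $(-1)^{n-1} p_n(1-x) = -p_n(x)$ for $n \geq 2$.

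For parts 3 and 4 I regard $G(x,\cdot)$ as a meromorphic function of $y$. Its singularities are the simple poles $y_k = a(x) + i(2k+1)\pi$, $k \in \Z$, with $a(x) = \log((1-x)/x)$, each of residue $1$ (a direct calculation). Starting from the Cauchy formula $p_n(x) = (2\pi i)^{-1}\oint G(x,y)\, y^{-n}\,dy$ around a small circle and deforming outward through contours $|y| = R_N$ chosen between consecutive $|y_k|$, the uniform boundedness of $G$ on these contours together with the decay $R_N^{-n}$ (valid for $n \geq 2$) drives the outer integrals to zero, producing the absolutely convergent Mittag-Leffler expansion $p_n(x) = -\sum_{k \in \Z} y_k^{-n}$. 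Factoring out $(2\pi i)^n$ via $y_k = 2\pi i(k + y_0/(2\pi i))$ and splitting the sum into $k \geq 0$ and $k < 0$ (with $k \mapsto -k-1$ in the latter extracting a factor $(-1)^n$) repackages the sum as the two Hurwitz zeta functions of part 4. Part 3 then follows by keeping only the two nearest poles $y_0, \bar y_0$, both of modulus $|y_0| = \sqrt{a(x)^2 + \pi^2}$, since the next pair has strictly larger modulus; this yields
\[
p_n(x) = -\frac{2\cos(n\phi(x))}{|y_0(x)|^n} + O(|y_1|^{-n}),
\qquad \phi(x) := \arg y_0(x) \in (0,\pi).
\]
Differentiating $\tan\phi = \pi/a(x)$ and using $a'(x) = -1/(x(1-x))$ gives $|\phi'(x)| = \pi/(x(1-x)|y_0(x)|^2)$, and the standard equidistribution principle for zeros of $\cos(n\phi(x))$ on an interval of monotonicity of $\phi$ delivers density $|\phi'|/\pi = \rho(x)$.

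The main obstacle is part 2, because the asymptotic above controls only the approximate locations of zeros. For an exact statement I would invoke the polylogarithm identity $p_{n+1}(1/(1-x)) = \frac{(-1)^n}{n!}\operatorname{Li}_{-n}(x)$ recalled in the introduction. Writing $\operatorname{Li}_{-n}(x) = xA_n(x)/(1-x)^{n+1}$, with $A_n$ the $n$th Eulerian polynomial, and substituting $z = 1/(1-x)$ rearranges this to
\[
p_{n+1}(z) = \frac{(-1)^n}{n!}(z-1)\, z^n\, A_n\!\left(\tfrac{z-1}{z}\right),
\]
so the zeros of $p_{n+1}$ on $[0,1]$ are exactly $z=0$, $z=1$, and $z_j = 1/(1-w_j)$, where $w_1 < \ldots < w_{n-1} < 0$ are the zeros of $A_n$. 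By the classical theorem of Frobenius, the Eulerian polynomials have simple, real, negative, mutually interlaced zeros, and since $w \mapsto 1/(1-w)$ is an order-preserving bijection of $(-\infty, 0)$ onto $(0,1)$, the interlacing of the $w_j$ transfers verbatim to the $z_j$. What remains is careful bookkeeping: matching the boundary zeros $x_1^{[n]} = 0$ and $x_n^{[n]} = 1$ and the interior labelling so that the three-term inequality $x_j^{[n]} < x_{j+1}^{[n+1]} < x_{j+1}^{[n]}$ reads off from the corresponding interlacing for $A_{n-1}$ and $A_n$.
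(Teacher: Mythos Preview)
Your treatment of parts 1, 3 and 4 matches the paper's proof essentially step for step: the symmetry of $G$ under $(x,y)\mapsto(1-x,-y)$, the residue expansion of $G(x,\cdot)$ over the simple poles $y_k$, the dominant-pole asymptotic, and the Hurwitz-zeta repackaging are all carried out the same way.

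The genuine divergence is in part 2. You invoke the polylogarithm identity from the introduction and Frobenius's theorem on Eulerian polynomials, then transport the interlacing through the M\"obius map $w\mapsto 1/(1-w)$. This is correct, and the bookkeeping you flag does work out, but it imports nontrivial external results. The paper instead observes a second identity for the generating function that you did not write down: besides $\partial_y G=G(1-G)$, one has
\[
\partial_y G \;=\; x(1-x)\,\partial_x G,
\]
which immediately yields the recurrence $n\,p_{n+1}(x)=x(1-x)\,p_n'(x)$. From this the interlacing follows by a self-contained Rolle/degree-count induction: between consecutive zeros of $p_n$ in $[0,1]$ lies a zero of $p_n'$, the factor $x(1-x)$ supplies the boundary zeros, and $S(n,1)=1$ guarantees $p_n'(0)\neq 0$ (hence $p_n'(1)\neq 0$ by the symmetry), so all $n+1$ zeros of $p_{n+1}$ are simple and correctly placed. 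Your route has the merit of tying the result explicitly to the classical Eulerian picture; the paper's route is more elementary and keeps the argument internal to the generating-function framework you already set up.
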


\begin{proof}
\begin{enumerate}
\item
The generating function $G(x,y)$ from eq.\ (\ref{def: generating function of the pn}) satisfies
\begin{equation}\label{eq: G(x,y)-x using coth y/2}
G(x,y)-x
=\frac{2x(1-x)}{2x-1+\coth\frac{y}{2}}
\end{equation}
so $G(1-x,-y)=1-G(x,y)$.
It follows that $p_1(1-x)=1-p_1(x)$ and eq.\ (\ref{eq: symmetry resp. antisymmetry of pn(x) under x mapsto 1-x}) holds.
\item
We show by induction that all zeroes are simple and real. 
The statement is true for $n=1,2$.
$G(x,y)$ satisfies the PDE $\partial G/\partial y=x(1-x)\:\partial G/\partial x$.
Thus for $n\geq 1$,
\begin{equation*}
np_{n+1}(x)
=x(1-x)p_n'(x)
\:.
\end{equation*}
It follows that for $n\geq 3$, between every pair of subsequent zeroes of $p_n(x)$ in $[0,1]$ there lies at least one real zero of $p_{n+1}(x)$,
in fact one simple zero since $\deg p_n'=\deg p_n-1$.
By the fact that $S(n,1)=1$, $p_n'(x)$ does not vanish at $x=0$, so $p_{n+1}$ has two additional simple zeroes at $x=0,1$.
\item
By eq.\ (\ref{eq: G(x,y)-x using coth y/2}), for every $x\in(0,1)$, $G(x,y)$  has a simple pole at
\begin{equation*}
y_k
=\Re\left(\log\frac{x-1}{x}\right)
+\pi i(2k+1)\:,
\quad k\in\Z\:,
\end{equation*}
with residue $1$, since $\frac{d}{dy}(2x-1+\coth\frac{y}{2})|_{y=y_k}=2x(1-x)$.
Choose $\eps>0$ so that $G(x,y)$ is regular on the region enclosed by the contour $|y|=\eps$.
For $m\geq 0$ and $n\geq 1$,
\begin{equation}\label{eq: asymptotic behaviour of pn(x) for large n}
p_{n}(x)
=\ointctrclockwise_{|y|=|y_m|+\eps}\frac{G(x,y)}{y^{n}}\frac{dy}{2\pi i}
-\sum_{k=-m-1}^m\frac{1}{y_k^{n}}
\:.
\end{equation}
The integral tends to zero for $n\rechts\infty$ and the leading term in this limit is given by
\begin{equation*}
\frac{1}{y_{-1}^{n}}+ \frac{1}{y_0^{n}}
=\frac{2\cos(n\varphi)}{r^n}
\:.
\end{equation*}
Here we have in polar coordinates, $y_0=r\exp(i\varphi)=\bar{y}_{-1}$.
In order for $x$ to be a root of $p_{n}$ for large $n$, $\varphi$ must be an odd multiple of $\pi/(2n)$,
so $d\varphi\sim\pi/n$. 
Moreover,
\begin{equation*}
\frac{d\cot\varphi}{dx} 
=\frac{1}{\pi}\frac{1}{x(x-1)}\:,
\quad
\frac{d\cot\varphi}{d\varphi}
=-\frac{1}{1-\cot^2\varphi}
\:.
\end{equation*}
So the measure in the large $n$ limit on $(0,1)$ equals
\begin{equation*}
dx
\approx\frac{\pi}{n}\frac{dx}{d\varphi} 
=x(1-x)\frac{|y_0|^2}{n}
\end{equation*}
where $1/n$ is the point measure of a single zero of $p_n$.
\item
For $x\in(0,1)$ and for $n\geq 1$, we have 
\begin{equation*}
\frac{G(x,y)}{y^{n}}
=\frac{\frac{x}{1-x}e^y}{(1+\frac{x}{1-x}e^y)y^{n}}
\:\longrightarrow\:0\quad\text{for $|y|\rechts\infty$}
\end{equation*}
So taking in eq.\ (\ref{eq: asymptotic behaviour of pn(x) for large n}) for $n\geq 2$ the limit $m\rechts\infty$,
\begin{equation*}
p_{n}(x)
=-\sum_{k=-\infty}^{\infty}\frac{1}{y_k^{n}}
\:,
\end{equation*}
yields the claimed identity.
\end{enumerate}
\end{proof}

To polynomial associated to the inverse transformation (\ref{eq: writing convolutions of Z in terms of the g(k)}) 
only reproduces eq.\ (\ref{eq: definition of the generating function of the Stirling numbers of the first kind}).

\section{Conclusion and outlook}

We have generalized the standard convolution for regular periodic functions on $\R$ 
to a commutative and associative operation on arbitrary quasi-elliptic functions,
which defines a second algebra structure on the space of such functions.
Moreover, we provide a method for computing iterated convolutions of elliptic functions explicitly,
which answers a question raised in \cite{L:2018}.
Our approach naturally leads to the discussion of a family of polynomials with attractive features,
closely related to the Eulerian polynomials but of greater transparency.
There certainly is a prospect of applications in various areas.

This paper settles the case for indecomposable bundles of degree zero over elliptic curves.
The treatment of convolutions of sections in more general vector bundles is left for future work.
The first case to handle are mock Jacobi forms.

It is conceivable that the convolution defines an operation on the moduli space of vector bundles.
It will be interesting to study analogous structures in higher genus,
where translations on the elliptic curve are replaced by transations in the Jacobian.
This is related to general period integrals in the sense of Kontsevich and Zagier.

\subsection{Acknowledgement}

The author thanks W.\ Nahm for discussions in the early and final stages of this work.
This work is funded by a Government of Ireland Postdoctoral Award 2018/583 from the Irish Research Council.


\begin{thebibliography}{99}
\bibitem{AS:1965}
Abramowitz, M., and Stegun, I.: \textsl{Handbook of Mathematical Functions}, 
Dover Publications Inc., New York (1965); 
\bibitem{A:1957}
Atiyah, M.F.: \textsl{Vector bundles over an elliptic curve},
P.\ Lond.Math.\ Soc., Vol.\ 3--7, Issue 1, (1957), 414--452;
\bibitem{BDDT:2018}
Broedel, J., Duhr, C., Dulat, F., and Tancredi, L.: 
\textsl{Elliptic polylogarithms and iterated integrals on elliptic curves I: general formalism},
JHEP (2018): 93, [arXiv:1712.07089];
\bibitem{E:1847}
Eisenstein, G: \textsl{Beitr\"age zur Theorie der elliptischen Funktionen}, 
Crelle, \textbf{35} (1847), part VI, 153--274,
reprinted in \textsl{Mathematische Abhandlungen besonders aus dem Gebiete der h\"oheren Arithmetik und der elliptischen Funktionen. Mit einer Vorrede von C.F. Gauss}, Georg Olms Verlagsbuchhandlung (1967), 213--334;
\bibitem{K:1881}
Kronecker, L.: \textsl{Zur Theorie der elliptischen Funktionen}, 
Mathematische Werke, vol. IV, (1881), 311--318;
\bibitem{L:2018}
Leitner, M.: \textsl{The $(2,5)$ minimal model on degenerating genus two surfaces}, 
preprint [arXiv:1801.08387];
\bibitem{Levin:1997}
Levin, A.: \textsl{Elliptic polylogarithms: An analytic theory}, 
Compositio Mathematica \textbf{106} (1997), 267--282;
\bibitem{T:1945}
Truesdell, C.A.: \textsl{On a function which occurs in the theory of the structure of polymers}, 
Ann.\ Math., Second Series, \textbf{46.1} (1945), 144--157;
\bibitem{W:1976}
Weil, A.: \textsl{Elliptic functions according to Eisenstein and Kronecker}, 
Springer Verlag, New-York (1976);
\bibitem{Z:1991}
Zagier, D.: \textsl{Periods of modular forms and Jacobi theta functions}, 
Invent.\ Math.\ \textbf{104} (1991), 449--465;
\bibitem{Z:1-2-3}
Zagier, D.: \textsl{Elliptic modular forms and their applications}, 
in \textsl{The 1-2-3 of Modular Forms: Lectures at a Summer School in Nordfjordeid, Norway}, Universitext, 
Springer-Verlag, Berlin-Heidelberg-New York (2008), 1--103;
\bibitem{Zw:2002}
Zwergers, S.P.: \textsl{Mock theta functions},
Utrecht PhD thesis (2002).
\end{thebibliography}
 \end{document}